\definecolor{Chocolat}{rgb}{0.36, 0.2, 0.09}
\definecolor{BleuTresFonce}{rgb}{0.215, 0.215, 0.36}
\definecolor{EgyptianBlue}{rgb}{0.06, 0.2, 0.65}
\newtheorem{theorem}{Theorem}[section]
\newtheorem*{itheorem}{Theorem}%[section]
\newtheorem{corollary}[theorem]{Corollary}
\newtheorem{proposition}[theorem]{Proposition}
\theoremstyle{definition}
\newtheorem{definition}[theorem]{Definition}
\newcommand{\ac}{\scriptstyle \text{\rm !`}}
\DeclareMathOperator{\dgAlg}{\mathsf{wdg-alg}}
\DeclareMathOperator{\dgOp}{\mathsf{dg-op}}
\DeclareMathAlphabet{\pazocal}{OMS}{zplm}{m}{n}
\def\calA{\pazocal{A}}
\def\calB{\pazocal{B}}
\def\calC{\pazocal{C}}
\def\calG{\pazocal{G}}
\def\calI{\pazocal{I}}
\def\calP{\pazocal{P}}
\def\calQ{\pazocal{Q}}
\def\calT{\pazocal{T}}
\def\calU{\pazocal{U}}
\def\calX{\pazocal{X}}
\DeclareMathOperator{\Hom}{Hom}
\DeclareMathOperator{\id}{id}
\DeclareMathAlphabet{\mathbbold}{U}{bbold}{m}{n}
\def\k{\mathbbold{k}}
\begin{document}

\title[A Quillen adjunction between algebras and operads]{A Quillen adjunction between algebras and operads,\\ Koszul duality, and the Lagrange inversion formula}

\author{Vladimir Dotsenko}
\address{School of Mathematics, Trinity College, Dublin 2, Ireland}
\email{vdots@maths.tcd.ie}

\dedicatory{To Andr\'e Joyal, who always asks right questions}
\keywords{Operad, Koszul duality, Lagrange inversion}
\subjclass[2010]{18D50 (Primary), 18G55, 55P48 (Secondary)}

\begin{abstract}
We define, for a somewhat standard forgetful functor from nonsymmetric operads to weight graded associative algebras, two functorial ``enveloping operad'' functors, the right inverse and the left adjoint of the forgetful functor. Those functors turn out to be related by operadic Koszul duality, and that relationship can be utilised to provide examples showing limitations of two standard tools of the Koszul duality theory. We also apply these functors to get a homotopical algebra proof of the Lagrange inversion formula. 
\end{abstract}

\maketitle

\tableofcontents

\section*{Introduction}

It is well known (and utilised in many contexts) that the direct sum of components of any operad $\calP$ has a pre-Lie algebra structure, where the pre-Lie product is given, for $\alpha\in\calP(n)$, $\beta\in\calP(m)$, by the formula
 \[
\alpha\triangleleft\beta=\sum_{i=1}^n\alpha\circ_i\beta .
 \]
If the operad $\calP$ is nonsymmetric, there is an even simpler formula that has good algebraic properties:
 \[
\alpha\cdot\beta =\alpha\circ_1\beta 
 \]
(the reason for requiring the operad to be nonsymmetric is that for a symmetric operad there is no canonical way to choose one of the slots of an operation). By a direct inspection, this operation is associative, and the standard ``arity minus one'' weight grading is respected by this operation, leading to a weight graded algebra $\mathsf{A}(\calP)$. 

\smallskip 

In this paper, we define, for a weight graded associative algebra $\calA$, two different ways to ``revert'' this construction, producing, in a functorial way, two ``enveloping'' nonsymmetric operads which we denote~$\calU_{\min}(\calA)$ and~$\calU_{\max}(\calA)$. Each of these constructions has its own merit. We establish (Proposition \ref{prop:adjunction}) that the functor~$\calU_{\min}$ is the right inverse of the abovementioned functor $\mathsf{A}$, and, quite notably, the functor~$\calU_{\max}$ is the left adjoint of the functor $\mathsf{A}$, the adjunction being a Quillen adjunction for the standard choices of model structures on the corresponding categories. 

\smallskip 

There is also a somewhat unexpected relationship between the functors~$\calU_{\min}$ and~$\calU_{\max}$ coming from the operadic Koszul duality. More precisely, we establish the following result (where the superscript ${}^{\ac}$ denotes the Koszul dual cooperad, and $\calU^c_{\min}$ and $\calU^c_{\max}$ denote the analogous constructions for cooperads). 
\begin{itheorem}[{Corollary \ref{cor:MinMaxKoszulDuality}}] Suppose that a weight graded associative algebra $\calA$ is quadratic. 
\begin{itemize}
\item[(i)] Both operads\, $\calU_{\min}(\calA)$ and\, $\calU_{\max}(\calA)$ are quadratic as well.
\item[(ii)] For the corresponding homogeneous quadratic presentations of\, $\calU_{\min}(\calA)$ and\, $\calU_{\max}(\calA)$, we have 
\begin{gather*}
(\calU_{\min}(\calA))^{\ac}=\calU^c_{\max}(\calA^{\ac}) ,\\
(\calU_{\max}(\calA))^{\ac}=\calU^c_{\min}(\calA^{\ac}) .
\end{gather*}
\item[(iii)] The operads\, $\calU_{\min}(\calA)$ and\, $\calU_{\max}(\calA)$ are Koszul if and only if the algebra $\calA$ is Koszul. 
\end{itemize}
\end{itheorem}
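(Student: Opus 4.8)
The plan is to read all three assertions off explicit quadratic presentations, so that everything reduces to elementary linear algebra in the weight-two (arity-three) component of the free nonsymmetric operad, together with the standard Koszulness criteria. I would fix a quadratic presentation $\calA = T(V)/(R)$, where $V$ is the weight-one part (the arity-two operations) and $R\subseteq V^{\otimes 2}$ the relations, and recall the presentations of $\calU_{\min}(\calA)$ and $\calU_{\max}(\calA)$ obtained earlier. Both are quotients of the free nonsymmetric operad $\calF(V)$ on the binary generators $V$, whose weight-two part splits as $V\circ_1 V \oplus V\circ_2 V$, two copies of $V^{\otimes 2}$ indexed by the two ternary tree shapes. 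The relations of $\calU_{\max}(\calA)$ are the image of $R$ in the $\circ_1$-summand alone, nothing being imposed on $\circ_2$ (this is the freeness underlying the left-adjoint property), while those of $\calU_{\min}(\calA)$ are this copy of $R$ together with the diagonal subspace identifying $V\circ_2 V$ with $V\circ_1 V$; the latter is exactly what forces $\mathsf{A}(\calU_{\min}(\calA))=\calA$ in weight two, in accordance with the right-inverse property of Proposition~\ref{prop:adjunction}. Since in both cases the generators lie in arity two and the relations in arity three, part~(i) is immediate.

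For part~(ii) I would compute orthogonal complements. The Koszul-duality pairing on the weight-two part of $\calF(V)$ is block-diagonal for the decomposition $V\circ_1 V \oplus V\circ_2 V$, because compositions along distinct tree shapes are orthogonal, and on each block it is the tensor square of the evaluation pairing $V\otimes V^{*}\to\k$, up to a nonzero sign coming from the operadic suspension. Taking orthogonals, the relations of $\calU_{\min}(\calA)$ (the copy of $R$ in $\circ_1$ plus the diagonal) dualize to a diagonal copy of $R^{\perp}$ spread across both summands, which is precisely the corelation space of $\calU^c_{\max}(\calA^{\ac})$; symmetrically, the relations of $\calU_{\max}(\calA)$ ($R$ in $\circ_1$, with $\circ_2$ free) dualize to $R^{\perp}$ in $\circ_1$ together with all of $\circ_2$, the corelation space of $\calU^c_{\min}(\calA^{\ac})$. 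The interchange of $\min$ and $\max$ is exactly the observation that the ``$\circ_2$-free'' subspace and the diagonal subspace are orthogonal complements of each other. The only delicate point is keeping track of the suspension signs in the definition of the quadratic dual cooperad, which rescale each block but leave these identifications intact.

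For part~(iii) I would invoke the standard fact that a quadratic operad is Koszul if and only if its quadratic dual cooperad is, so that by~(ii) it suffices to treat, say, $\calU_{\max}(\calA)$, whose Koszul complex is $\calU^c_{\min}(\calA^{\ac})\circ\calU_{\max}(\calA)$. The aim is to show that this operadic Koszul complex is acyclic if and only if the algebra Koszul complex $\calA^{\ac}\otimes\calA$ is. I would filter the operadic Koszul complex by the number of $\circ_{\ge 2}$-compositions, equivalently by the underlying tree shape, and identify the associated graded with a sum, indexed by trees, of tensor products of copies of the algebra Koszul complex along the $\circ_1$-spine, with the transverse slots contributing factors that are either identity components or Koszul complexes of free pieces, hence acyclic. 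The usual acyclicity criterion then transfers Koszulness between $\calA$ and $\calU_{\max}(\calA)$, and via~(ii) to $\calU_{\min}(\calA)$ as well.

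I expect this last step to be the main obstacle. The product defining $\mathsf{A}$ uses only $\circ_1$, whereas the operadic Koszul differential mixes all composition slots and grafts along arbitrary trees, so constructing a filtration whose associated graded cleanly separates the ``algebra'' $\circ_1$-direction from the ``free'' transverse directions demands careful control of the tree combinatorics and of how the differential interacts with grafting. Once the associated graded is correctly identified, the transfer of acyclicity is formal.
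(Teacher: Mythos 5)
Your strategy for (i)--(ii) (explicit quadratic presentations plus linear algebra in the arity-three component) is viable and is in fact what the paper's proof parenthetically allows (``statements (i) and (ii) can be proved by a direct inspection of the definitions''), but your execution contains a genuine error. By Definition~\ref{def:AlgToOp}, the min-envelope \emph{annihilates} the higher compositions: $a_1\circ_i a_2=0$ for $i\ge 2$. So the quadratic relation space of $\calU_{\min}(\calA)$ is the copy of $R$ inside $V\circ_1 V$ together with \emph{all} of $V\circ_2 V$ --- not ``$R$ plus the diagonal subspace identifying $V\circ_2 V$ with $V\circ_1 V$''. A diagonal identification would force every $\circ_i$ to equal the product of $\calA$, which violates the parallel axiom (it would give $(f_1f_2)f_3=(f_1f_3)f_2$) and contradicts Theorem~\ref{th:EnvelopeExplicit}(i), which says the underlying collection is exactly $\calA^+$ with $\circ_{\ge 2}=0$. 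Your orthogonality computation inherits the error: the orthogonal of your diagonal-shaped space $\{(r+x,x)\}$ is the \emph{antidiagonal} $\{(\varphi,-\varphi):\varphi\in R^{\perp}\}$, which is not the corelation space of $\calU^c_{\max}(\calA^{\ac})$; and the claim that ``the $\circ_2$-free subspace and the diagonal subspace are orthogonal complements of each other'' is false. The correct (and simpler) statement is $(R_{\circ_1})^{\perp}=R^{\perp}_{\circ_1}\oplus V^{*}\circ_2 V^{*}$ and $(R_{\circ_1}\oplus V\circ_2 V)^{\perp}=R^{\perp}_{\circ_1}$: annihilating a whole block is dual to leaving it free, and that is the entire source of the min/max interchange. (You also conflate the quadratic dual operad, computed by orthogonal complements, with the Koszul dual \emph{cooperad}, whose corelation space is $s^2R$ itself; this is repairable by linear duality but must be said.) Finally, (i) is not ``immediate'': the envelopes are defined with \emph{all} of $\calA^{+}$ as generators and quadratic-linear relations, so one must prove that your binary quadratic presentations actually present them --- in the paper this comes out of the Gr\"obner bases computed in Theorem~\ref{th:EnvelopeExplicit}.

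For (iii) you offer a program, not a proof, and you say so yourself: the identification of the associated graded of your filtered Koszul complex --- the only hard step --- is left open. This is exactly where the paper does its real work, by a different route: using the Gr\"obner bases already in hand, it verifies the conditions $(ql_1)$, $(ql_2)$ of inhomogeneous Koszul duality and proves Theorem~\ref{th:EnvelopingBar}, valid for \emph{every} weight graded algebra $\calA$, giving $\mathsf{B}(\calU_{\max}(\calA))\simeq\calU^c_{\min}(\mathsf{B}(\calA))$ and $\mathsf{B}(\calU_{\min}(\calA))\cong\calU^c_{\max}(\mathsf{B}(\calA))$; Corollary~\ref{cor:MinMaxKoszulDuality} then follows formally because quadraticity, the Koszul dual, and Koszulness are all read off the bar homology (second syzygy homology in weight $2$, diagonal part, and concentration on the diagonal, respectively). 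Your filtration-by-tree-shape idea may well be completable --- the normal forms from Theorem~\ref{th:EnvelopeExplicit}(ii) provide precisely the PBW-type control on the associated graded one would need --- but as submitted, part (iii) is unproven, and part (ii) rests on an incorrect relation space for $\calU_{\min}(\calA)$ whose errors happen to cancel in your final formulas.
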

Let us remark that for the left adjoint of the functor producing a pre-Lie algebra from an operad (whether symmetric, nonsymmetric or shuffle), a similar result is not available: the corresponding left adjoint is quite ill-behaved with respect to the Koszul duality, as one can see already for the rather trivial example of the one-dimensional pre-Lie algebra with zero product. 

\smallskip 

The result we just stated allows us to convert various examples relevant in Koszul duality theory for weight graded associative algebras into respective examples for operads. Koszul duality for operads has two frequently used tests of Koszulness, and we exhibit examples showing that both of these tests are inconclusive. 

If one suspects that the given operad $\calP$ is not Koszul, a standard way to confirm that relies on the Ginzburg--Kapranov functional equation \cite{GiKa}. That equation, in modern terms, says that for a Koszul operad $\calP$, we have
 \[
g_{\calP}(g_{\calP^{\ac}}(t))=t ,
 \]
where $g$ is the Poincar\'e series of the operad (the generating series for the Euler characteristics of components). Thus, if one is lucky to know both series $g_{\calP}(t)$ and $g_{\calP^{\ac}}(t)$, one can check directly that they are not inverse to each other in order to prove that $\calP$ is not Koszul. This was utilised, for instance, in \cite{GoRe1,GoRe2} for operads of Lie-admissible and power-associative algebras associated to certain subgroups of $S_3$, in \cite{Dzh} for the operad of Novikov algebras and in \cite{DzhZu} for the operad of alternative algebras. In \cite{DzhZu}, it was asked whether there exists a non-Koszul operad
$\calP$ for which that functional equation holds. In this paper, we answer that question positively, establishing (Theorem \ref{th:GKCounter}) that there exist a Koszul operad $\calP_1$ and a non-Koszul operad $\calP_2$ for which 
 \[
g_{\calP_1}(t)=g_{\calP_2}(t)\quad  \text{ and  }\quad  g_{\calP_1^{\ac}}(t)=g_{\calP_2^{\ac}}(t) . 
 \]
In fact, for the operads $\calP_1$ and $\calP_2$ one can take the results of applying the functor $\calU_{\max}$ to the algebras invented by Piontkovski \cite{Piont} who showed that the Koszulness test for weight graded algebras based on the Backelin's functional equation \cite{Ba} is inconclusive. It makes sense to remark here that there exists a completely different example of non-Koszul weight graded algebras defying the Ginzburg--Kapranov test invented by Positselski \cite{Posic} who used Manin products of quadratic algebras to ``truncate'' algebras, removing components of large weight grading. In general, Manin products for quadratic operads are ill-suited for Koszul duality purposes~\cite{Va}, although it is possible that applying ideas similar to those of \cite[Prop.~7]{Do} leads to well-behaved truncations that allow one to mimic the approach of~\cite{Posic}.

The other way round, if one suspects that the given operad $\calP$ is Koszul, a convenient way to confirm that relies on operadic Gr\"obner bases \cite{DoKh}. More precisely, it is known that an operad with a quadratic Gr\"obner basis of relations is Koszul. Thus, if one can guess a monomial order for which the ideal of relations of $\calP$ has a quadratic Gr\"obner basis, this automatically proves that $\calP$ is Koszul. On several occasions the author of this paper was asked for examples of a Koszul operad whose Koszulness cannot be established using this criterion.  In this paper, we establish (Theorem \ref{th:GrobnerCounter}) that there exists a Koszul operad for which there is no choice of a monomial order such that its ideal of relations has a quadratic Gr\"obner basis. Once again, an example can be obtained applying the functor $\calU_{\max}$ to the example of Berger \cite{Berger} defying the corresponding Koszulness test for weight graded algebras.

\smallskip 

A completely different application of enveloping operads is that to combinatorics. We are able to demonstrate (Theorem \ref{th:LIT}) that the bar / cobar duality for operads paired with an elementary combinatorial lemma of Raney \cite{Raney} leads to a new proof of the Lagrange inversion formula for power series. There are interpretations of Lagrange inversion from the category theory viewpoint, e.g. \cite{ErMe,GeLa,Ha,Jo}, but to the best of our knowledge, they are quite different from the proof we present.

\subsection*{Acknowledgements} A substantial part of this paper was written at the Max Planck Institute for Mathematics in Bonn, where the author's stay was supported through the programme ``Higher Structures in Geometry and Physics''; the author thanks MPIM for for the excellent working conditions enjoyed during his visit. 
The author is also grateful to Martin Markl and Elisabeth Remm for useful discussions, and to Imma Galvez-Carillo and Andy Tonks for their hospitality during the final stage of working on the paper. Special thanks are to Andr\'e Joyal who asked the author about the possible application to the Lagrange inversion formula.  

\section{Background, notations, and recollections}\label{sec:recollections}

We recall here some basic background information; for a general treatment of operads, we refer the reader to \cite{LoVa}, for a comprehensive coverage of  Gr\"obner bases to \cite{BrDo}, for an extensive treatment of model categories to \cite{Ho}. All vector spaces and (co)chain complexes throughout this paper are defined over an arbitrary field~$\k$. To handle suspensions of chain complexes, we introduce an element~$s$ of degree~$1$, and define, for a graded vector space~$V$, its suspension $sV$ as $\k s\otimes V$. 

\subsection{Weight graded algebras and nonsymmetric operads}

The key point of this paper is unravelling a relationship between homotopical properties of \emph{weight graded algebras} and \emph{nonsymmetric operads}. Both of those are monoids in the category of nonsymmetric collections, the former for the \emph{tensor product of collections} 
 \[
(\calP\otimes\calQ)(n)=\bigoplus_{i+j=n}\calP(i)\otimes\calQ(j) ,
 \]
and the latter for the \emph{(nonsymmetric) composition of collections}
 \[ 
(\calP\circ\calQ)(n)=\bigoplus_{k\ge 0}\calP(k)\otimes\calQ^{\otimes k}(n) .
 \]
For brevity, we use the abbreviation `ns' instead of the word `nonsymmetric'.  We assume all weight graded algebras $\calP$ \emph{connected} ($\calP(0)\cong\k$). We also assume all ns operads \emph{reduced} ($\mathcal{P}(0)=0$) and \emph{connected} ($\mathcal{P}(1)\cong\k$). We use the notation $\mathcal{X}\cong\mathcal{Y}$ 
for isomorphisms of ns collections, and the notation $\mathcal{X}\simeq\mathcal{Y}$ for weak equivalences (quasi-isomorphisms). 

We insist on the terminology ``weight graded'' since we shall use the word ``graded'' on its own in the sense which is standard for homotopy algebra purposes, referring to homological grading and its associated Koszul sign rule. Moreover, we shall mainly consider the categories $\dgAlg$ and $\dgOp$ which are, respectively, the categories of weight graded algebras and ns operads whose components are chain complexes, and morphisms are chain maps respecting the algebra / operad structure.

We recall that ns operads can also be defined in terms of \emph{infinitesimal (partial) composition products} $\circ_i$; those products on an operad $\calP$ must satisfy the 
\emph{parallel} and \emph{sequential} axioms: 
\begin{gather*}
(f_1\circ_j f_2)\circ_i f_3=(-1)^{|f_2||f_3|}(f_1\circ_i f_3)\circ_{j+n_3-1} f_2,\\
f_1\circ_i(f_2\circ_k f_3)=(f_1\circ_i f_2)\circ_{k-i+1}f_3 
\end{gather*}
 for all $f_1\in\calP(n_1)$, $f_2\in\calP(n_2)$, $f_3\in\calP(n_3)$, and $1\le i<j\le n_1$, $1\le k\le n_2$.

The free operad generated by a ns collection $\mathcal{X}$ is denoted $\mathcal{T}(\mathcal{X})$, the cofree (conilpotent) cooperad cogenerated by a ns collection $\mathcal{X}$ is denoted $\mathcal{T}^c(\mathcal{X})$; the former is spanned by ``tree tensors'' (planar trees where each internal vertex $v$ carries a label from $\calX(k)$, where $k$ is the number of inputs of $v$), and has its \emph{composition product}, and the latter has the same underlying ns collection but a different structure, a \emph{decomposition coproduct}. The abovementioned ns collection is weight graded (a tree tensor has weight $p$ if its underlying tree has $p$ internal vertices), and we denote by $\mathcal{T}(\mathcal{X})^{(p)}$ the subcollection which is the span of all elements of weight $p$.

\subsection{Koszul duality for quadratic (co)operads}

A pair consisting of a ns collection $\mathcal{X}$ and a subcollection $\mathcal{R}\subset\mathcal{T}(\mathcal{X})^{(2)}$ is called \emph{quadratic data}. To a choice of quadratic data one can associate the \emph{quadratic operad $\calP=\calP(\mathcal{X},\mathcal{R})$ with generators $\mathcal{X}$ and relations $\mathcal{R}$}, the largest quotient operad $\mathcal{O}$ of $\mathcal{T}(\mathcal{X})$ for which the composite
\[
\mathcal{R}\hookrightarrow\mathcal{T}(\mathcal{X})^{(2)}\hookrightarrow\mathcal{T}(\mathcal{X})\twoheadrightarrow\mathcal{O}
\]
is zero. Also, to a choice of quadratic data one can associate the \emph{quadratic cooperad $\mathcal{C}=\mathcal{C}(\mathcal{X},\mathcal{R})$ with cogenerators $\mathcal{X}$ and corelations $\mathcal{R}$}, the largest subcooperad $\mathcal{Q}\subset\mathcal{T}^c(\mathcal{X})$ for which the composite
 \[
\mathcal{Q}\hookrightarrow\mathcal{T}^c(\mathcal{X})\twoheadrightarrow\mathcal{T}^c(\mathcal{X})^{(2)}\twoheadrightarrow\mathcal{T}^c(\mathcal{X})^{(2)}/\mathcal{R}
 \]
is zero.
The Koszul duality for operads assigns to every quadratic operad $\calP=\calP(\mathcal{X},\mathcal{R})$ its \emph{Koszul dual cooperad}
$\calP^{\ac} := \mathcal{C}(s\mathcal{X},s^2\mathcal{R})$.   
Recall that the (left) Koszul complex of a ns quadratic operad $\calP=\calP(\mathcal{X},\mathcal{R})$ is the ns collection $\calP\circ\calP^{\ac}$ equipped with a certain differential coming from a ``twisting morphism''
 \[
\varkappa\colon \mathcal{C}(s\mathcal{X},s^2\mathcal{R})\twoheadrightarrow s\mathcal{X}\to \mathcal{X}\hookrightarrow\calP(\mathcal{X},\mathcal{R}) , 
 \]
see \cite[Sec.~7.4]{LoVa} for details. This allows to define Koszul operads: a quadratic operad $\calP$ is said to be \emph{Koszul} if its Koszul complex is acyclic, so that the inclusion
 \[
\k\cong(\calP\circ\calP^{\ac})(1)\hookrightarrow\calP\circ\calP^{\ac} 
 \]
induces an isomorphism in the homology. 

The bar complex $\mathsf{B}(\calP)$ is, by definition, the cofree cooperad $\mathcal{T}^c(s\calP)$ equipped with the differential coming from the infinitesimal products $\circ_i$ on $\calP$. This differential graded cooperad is bi-graded (by the syzygy degree and the weight degree), and it is known \cite[Prop.~7.3.2]{LoVa} that for a quadratic operad $\calP$ the ``diagonal'' part (the weight degree exceeds the syzygy degree by exactly one) of its homology is isomorphic to $\calP^{\ac}$. According to \cite[Th.~7.4.6]{LoVa}, a quadratic operad $\calP$ is Koszul if and only if the homology of its bar complex is concentrated on the diagonal. 

Definitions and results recalled in this section are also available for weight graded algebras, see, e.g. \cite[Ch.~3]{LoVa}; the corresponding bar complexes are cofree (conilpotent) coalgebras. 

\subsection{Poincar\'e series for algebras and operads}

A very useful numerical invariant of a ns collection is given by its Poincar\'e series. For a ns collection $\calX$ with finite-dimensional components, its  \emph{Poincar\'e series} $g_{\mathcal{X}}(t)$ is the generating series for Euler characteristics of components of $\calX$: 
 \[
g_{\mathcal{X}}(t) = \sum_{n\ge 0}\chi(\mathcal{X}(n))t^n .
 \]
An important property of the Poincar\'e series is that it is compatible with both tensor products and nonsymmetric compositions:
Let $\mathcal{X}$ and $\mathcal{Y}$ be two ns collections  with finite-dimensional components. Then
\begin{gather*}
g_{\mathcal{X}\otimes\mathcal{Y}}(t)=g_{\mathcal{X}}(t)g_{\mathcal{Y}}(t) ,\\
g_{\mathcal{X}\circ\mathcal{Y}}(t)=g_{\mathcal{X}}(g_{\mathcal{Y}}(t)) .
\end{gather*}
Inspecting the definitions of Koszul algebras and operads, one arrives at the \emph{Backelin functional equation} \cite{Ba} for connected weight graded Koszul algebras 
 \[
g_{\calA}(t)g_{\calA^{\ac}}(t)=1 
 \]
and the \emph{Ginzburg--Kapranov functional equation} \cite{GiKa} for reduced connected ns Koszul operads 
 \[
g_{\calP}(g_{\calP^{\ac}}(t))=t .
 \]

\subsection{Gr\"obner bases for ns operads}

The free ns operad $\calT(\calX)$ has a basis of \emph{tree monomials}: if we choose a linear basis for each component $\calX(n)$, a tree monomial is tree tensor whose vertices are labelled by the basis elements of $\calX$. Each composition of tree monomials is again a tree monomial. 

There exist several ways to introduce a \emph{monomial order}: a total ordering of tree monomials in such a way that the operadic compositions are compatible with that  ordering. Once a  The naive combinatorial definition of \emph{divisibility} of tree monomials agrees with the operadic definition: one tree monomial occurs as a subtree in another one if and only if the latter can be obtained from the former by operadic compositions. Once a monomial order is fixed, one can perform a ``long division'' modulo any collection $\calG$ of elements of $\calT(\calX)$, producing for every element its \emph{reduced form}. A \emph{Gr\"obner basis} of an ideal $\calI$ of the free operad $\calT(\calX)$ is a system $\calG$ of generators of~$\calI$ for which the leading monomial of every element of $\calI$ is divisible by one of the leading terms of elements of~$\calG$. One can show that the cosets of the tree monomials that are not divisible by the leading terms of the Gr\"obner basis form a basis of the quotient $\calT(\calX)/\calI$, so each element has a unique reduced form modulo a Gr\"obner basis. The most powerful Gr\"obner basis criterion is the \emph{Diamond Lemma}, which states that $\calG$ is a Gr\"obner basis if and only if for each common multiple of two leading monomials of $\calG$, a certain element (\emph{S-polynomial} associated to that common multiple) has a reduced form equal to zero.

\subsection{Model categories of weight graded algebras and ns operads}

A model category is a category $\mathsf{C}$ together with three distinguished classes of maps: \emph{weak equivalences}, \emph{fibrations} and \emph{cofibrations}; these maps have to satisfy certain axioms which we do not list here. According to the philosophy of Quillen \cite{Qu}, this is precisely the kind of data needed to describe the \emph{homotopy category} $\mathop{\mathsf{Ho}}(\mathsf{C})$, which is the localisation of $\mathsf{C}$ with respect to the weak equivalences. It follows from the result of Hinich \cite{Hi} that each of the categories $\dgAlg$ and $\dgOp$ admits a model category structure for which 
\begin{itemize}
\item the class $\mathfrak{W}$ of weak equivalences is given by the quasi-isomorphisms (morphisms that induce isomorphisms on the homology);
\item the class $\mathfrak{F}$ of fibrations is given by the morphisms which are component-wise epimorphisms; 
\item the class $\mathfrak{C}$ of cofibrations is given by the morphisms which satisfy the left lifting property with respect
to acyclic fibrations $\mathfrak{F}\cap\mathfrak{W}$.
\end{itemize}
We shall refer to those model category structures as \emph{standard} model category structures on $\dgAlg$ and $\dgOp$.

\smallskip 

For two model categories $\mathsf{C}$ and $\mathsf{D}$, a pair of adjoint functors 
 \[
L\colon \mathsf{C}\rightleftharpoons\mathsf{D} \colon R
 \]
implements a \emph{Quillen adjunction} if the following equivalent conditions are satisfied:
\begin{itemize}
\item[(i)] $L$ preserves cofibrations and acyclic cofibrations;
\item[(ii)] $R$ preserves fibrations and acyclic fibrations.
\end{itemize}
Quillen adjunctions are of importance since by Quillen's total derived functor theorem they induce adjunctions of the respective homotopy categories. 

\section{Two kinds of enveloping ns operads and their functorial properties}

In this section, we describe a forgetful functor from the category of dg ns operads to the category of weight graded dg algebras and two kinds of ``enveloping operads''; one of them is the left adjoint of the forgetful functor, while the other is its right inverse. 

\smallskip 

For a ns collection $\calA$, we define its shifts up $\calA^+$ and down $\calA^-$ by the respective formulas
 \[
\calA^+(n)=
\begin{cases}
\qquad 0, \qquad n=0,\\
\calA(n-1), \,\,\,\, n\ge 1, 
\end{cases}
\qquad\qquad 
\calA^-(n)=
\calA(n+1) ,\,\,\, n\ge 0.
 \]

\begin{definition}\label{def:OpToAlg}
The functor 
 \[
\mathsf{A}\colon \dgOp\to \dgAlg
 \]
is defined as follows. On the level of ns collections, we have
 \[
\mathsf{A}(\calP) = \calP^- .
 \]
The product 
 \[
\mathsf{A}(\calP)(k)\otimes\mathsf{A}(\calP)(l)=\calP(k+1)\otimes\calP(l+1)\to \calP(k+l+1)= \mathsf{A}(\calP)(k+l), 
 \]
is defined using the infinitesimal composition at the first slot $\circ_1$ of the operad $\calP$. Associativity of the product follows from the sequential axiom of ns operads.
\end{definition}

The protagonists of this paper are the following two kinds of enveloping operads.

\begin{definition}\label{def:AlgToOp} Let $\calA$ be a weight graded dg algebra. 
\begin{enumerate}
\item The \emph{max-envelope operad}~$\calU_{\max}(\calA)$ is defined as the quotient of the free ns operad $\calT(\calA^+)$ by the ideal generated by the element $1-\id$ and all elements of the form
 \[
a_1\circ_1 a_2-a_1\cdot a_2\ . 
 \]
\item The \emph{min-envelope operad}~$\calU_{\min}(\calA)$ is defined as the quotient of the free ns operad $\calT(\calA^+)$ by the ideal generated by the element $1-\id$ and all elements of the form
 \[
a_1\circ_1 a_2-a_1\cdot a_2 , \qquad a_1\circ_i a_2 \quad (i\ge 2) \ .
 \]
\end{enumerate}
In both cases, we take $a_1\in \calA(k-1)=\calA^+(k)$ and $a_2\in \calA(l-1)=\calA^+(l)$ for some $k$ and $l$, so that $a_1\cdot a_2\in\calA(k+l-2)$, which we identify with $\calA^+(k+l-1)$; we also use the identification $\calA(0)=\calA^+(1)$ when writing the element $1-\id$ .
\end{definition}

We begin with describing the underlying ns collections of~$\calU_{\min}(\calA)$ and~$\calU_{\max}(\calA)$ explicitly. For that, we shall use Gr\"obner bases for ns operads. In order to do that, it is necessary to choose a linear basis of generators, and a monomial order of the basis of tree monomials in the free ns operad arising from that choice of a basis.  In our case, choosing a basis of $\calA$ leads to a natural choice of a basis of $\calT(\calA^+)$ consisting of tree monomials where each internal vertex with $k$ inputs is decorated by a basis element of $\calA(k+1)$. As a monomial order, we use the path degree lexicographic order.

\begin{theorem}\label{th:EnvelopeExplicit} Let $\calA$ be a weight graded algebra. 
\begin{itemize}
\item[(i)] On the level of underlying ns collections, we have
\begin{equation}\label{eq:Umin}
\calU_{\min}(\calA)\cong \calA^+  .
\end{equation}
\item[(ii)] The underlying ns collection of~$\calU_{\max}(\calA)$ has a basis formed by the cosets of tree monomials for which the leftmost input of each internal vertex is a leaf.
\end{itemize}
\end{theorem}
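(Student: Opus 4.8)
The plan is to prove both parts by exhibiting a quadratic Gröbner basis for the defining ideals and then reading off the normal (non-divisible) tree monomials. For both operads, the generating collection is $\calA^+$, with an internal vertex of arity $k$ decorated by a basis element of $\calA(k+1)$, and the monomial order is the path-degree-lexicographic order as fixed before the statement. The relations $a_1\circ_1 a_2 - a_1\cdot a_2$ (together with $1-\id$, which simply eliminates the arity-$1$ generator coming from $\calA(0)\cong\k$) allow us to rewrite any composition occurring at the first slot in terms of the algebra product, lowering the weight of the tree. For $\calU_{\min}(\calA)$ we additionally have the relations $a_1\circ_i a_2$ for $i\ge 2$, which kill \emph{every} composition at a non-first slot.

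For part (i), I would argue that modulo the min-relations, every tree monomial of weight $\ge 2$ reduces either to zero (if it ever composes at a slot $i\ge 2$) or, by iterating $a_1\circ_1 a_2 = a_1\cdot a_2$, to a single-vertex (weight-one) monomial decorated by a product in $\calA$. Thus the only surviving normal monomials are the weight-one generators, i.e. the elements of $\calA^+$, plus the operadic unit $\id$ in arity $1$. To make this rigorous I would verify via the Diamond Lemma that the chosen generators form a Gröbner basis: one checks that all S-polynomials arising from overlaps of the leading terms reduce to zero. The key compatibilities are exactly the sequential and parallel axioms recalled in the excerpt, which guarantee that the two ways of resolving a triple composition agree after reduction. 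Since the quotient has the same Poincaré/dimension data as $\calA^+$ in each arity, the induced map $\calA^+ \to \calU_{\min}(\calA)$ is an isomorphism of ns collections.

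For part (ii), only the relations $a_1\circ_1 a_2 - a_1\cdot a_2$ (and $1-\id$) are imposed. Their leading term, under path-degree-lex, is the composition $a_1\circ_1 a_2$ at the first slot, so a tree monomial is divisible by a leading term precisely when some internal vertex has its leftmost input produced by another internal vertex rather than being a leaf. Hence the normal monomials are exactly those tree monomials in which the leftmost input of every internal vertex is a leaf, which is the asserted basis. Again the substance is a Diamond-Lemma verification: one must show that $\{a_1\circ_1 a_2 - a_1\cdot a_2\}_{a_1,a_2}$, together with $1-\id$, is a Gröbner basis, i.e. that the S-polynomials from overlapping first-slot compositions reduce to zero. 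The relevant overlap is a chain $a_1\circ_1(a_2\circ_1 a_3)=(a_1\circ_1 a_2)\circ_1 a_3$, and confluence follows from the sequential axiom combined with the associativity of the algebra product $\cdot$ on $\calA$ (which itself comes from that axiom via Definition~\ref{def:OpToAlg}).

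The main obstacle I anticipate is the bookkeeping in the Diamond-Lemma confluence check, particularly confirming that the leading term of $a_1\circ_1 a_2 - a_1\cdot a_2$ is indeed the composition rather than the product, and that no unexpected overlaps between the unit relation $1-\id$ and the quadratic relations create new obstructions. One must be careful that reducing at the first slot can itself expose a new leftmost-input composition higher up the tree, so a well-founded induction on weight (number of internal vertices) is needed to guarantee termination and that the reduction process indeed lands in the claimed normal form. Once confluence and termination are established, both parts follow immediately by counting the non-divisible tree monomials.
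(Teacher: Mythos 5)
Your proposal is correct and follows essentially the same route as the paper's proof: eliminate the generator $1$ via the relation $1-\id$, verify through the Diamond Lemma (associativity of $\calA$ handling the first-slot chain overlaps, the monomial relations $a_1\circ_i a_2$, $i\ge 2$, handling the mixed ones) that the defining relations form a Gr\"obner basis for the path-degree-lexicographic order, and read off the normal monomials, namely the single-vertex monomials for $\calU_{\min}(\calA)$ and the trees whose internal vertices have a leaf as leftmost input for $\calU_{\max}(\calA)$. The bookkeeping you flag as the main obstacle is precisely the paper's short three-case analysis of overlaps, so there is no gap.
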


\begin{proof}
The relation $1-\id$ can be used to eliminate the generator $1$ completely, which we shall do right away; from here onwards, we consider the presentations of our operads where the set of generators is identified with the augmentation ideal of $\calA$. 
 
Let us first demonstrate that for both operads~$\calU_{\min}(\calA)$ and~$\calU_{\max}(\calA)$, their defining relations form a Gr\"obner basis. For the latter operad, all the S-polynomials corresponding to overlaps of the leading terms can be reduced to zero due to associativity of the product in $\calA$. For the former one, there are three types overlaps of the leading terms of defining relations. There are overlaps where both leading terms use the infinitesimal composition in the first slot, and for them the corresponding S-polynomials are reduced to zero due to associativity of $\calA$. There are overlaps where exactly one of the leading terms uses the infinitesimal composition in the first slot, and the corresponding S-polynomial is immediately reduced to zero using the defining relations of the type $a_1\circ_i a_2$ $(i\ge 2)$. Finally, there are overlaps where neither of the leading terms uses the infinitesimal composition in the first slot, and the corresponding S-polynomial is equal to zero. Overall, we see that the Diamond Lemma applies, and the defining relations form a Gr\"obner basis in both cases.

The explicit description of the underlying ns collections of the operads~$\calU_{\min}(\calA)$ and~$\calU_{\max}(\calA)$ easily follows from the fact that the leading terms of the Gr\"obner basis we found are precisely all the tree monomials with two internal vertices in the case of~$\calU_{\min}(A)$ and all the tree monomials with two internal vertices that are obtained by a composition in the first slot in the case of~$\calU_{\max}(\calA)$. 
\end{proof}

The first application of Theorem \ref{th:EnvelopeExplicit} is to describing the functorial properties of our two constructions. Note that both enveloping operads we defined make sense for weight graded dg algebras, giving rise to functors from the category $\dgAlg$ to the category $\dgOp$. 

\begin{proposition}\label{prop:adjunction}\leavevmode
\begin{enumerate}
\item The functor~$\calU_{\min}\colon\dgAlg\to\dgOp$ is the right inverse of the functor $\mathsf{A}\colon \dgOp\to\dgAlg$.
\item The functors 
 \[
\calU_{\max} \colon \dgAlg \rightleftharpoons \dgOp \colon \mathsf{A}
 \]
form a pair of adjoint functors, where~$\calU_{\max}$ is left adjoint to $\mathsf{A}$. Moreover, this adjunction is a Quillen adjunction for the standard model category structures.
\end{enumerate}
\end{proposition}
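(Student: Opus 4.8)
The plan is to handle the two assertions separately, in each case reducing to Theorem~\ref{th:EnvelopeExplicit} together with elementary bookkeeping of the arity shifts $(-)^+$ and $(-)^-$.

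For part (1), I would begin by invoking Theorem~\ref{th:EnvelopeExplicit}(i), which identifies the underlying ns collection of $\calU_{\min}(\calA)$ with $\calA^+$. Applying the functor $\mathsf{A}$ then reindexes this collection back to $\calA$, since $(\calA^+)^-(n)=\calA^+(n+1)=\calA(n)$. What remains is to check that the algebra structure so obtained coincides with that of $\calA$. By Definition~\ref{def:OpToAlg} the product on $\mathsf{A}(\calP)$ is the infinitesimal composition $\circ_1$, while the defining relation $a_1\circ_1 a_2-a_1\cdot a_2$ of $\calU_{\min}(\calA)$ says precisely that $\circ_1$ reproduces the product of $\calA$; the relation $1-\id$ matches the units. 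Naturality in $\calA$ and compatibility with the differentials are immediate from the functoriality of the two constructions, yielding $\mathsf{A}\circ\calU_{\min}=\id$ on $\dgAlg$.

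For the adjunction in part (2), I would argue directly from the presentation of $\calU_{\max}(\calA)$ as a quotient of the free ns operad $\calT(\calA^+)$. By freeness, a morphism of operads $\calU_{\max}(\calA)\to\calP$ is the same datum as a morphism of ns collections $\calA^+\to\calP$ whose induced map on $\calT(\calA^+)$ annihilates the generating relations. Reindexing such a collection map as a map $\calA\to\calP^-=\mathsf{A}(\calP)$, the relation $1-\id$ becomes the condition that the map be unital, and the relation $a_1\circ_1 a_2-a_1\cdot a_2$ becomes exactly the condition that it respect products, because the product on $\mathsf{A}(\calP)$ is $\circ_1$. Hence operad morphisms $\calU_{\max}(\calA)\to\calP$ correspond bijectively and naturally to algebra morphisms $\calA\to\mathsf{A}(\calP)$, which is the desired adjunction; that the bijection is compatible with differentials on both sides holds because a morphism in either $\dgOp$ or $\dgAlg$ is by definition a chain map.

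For the Quillen property I would verify condition (ii), namely that the right adjoint $\mathsf{A}$ preserves fibrations and acyclic fibrations. This is essentially formal: since $\mathsf{A}$ acts on underlying collections merely by the reindexing $\calP\mapsto\calP^-$, a morphism $f$ is a component-wise epimorphism, respectively a quasi-isomorphism, if and only if $\mathsf{A}(f)$ is. Therefore $\mathsf{A}$ preserves the class $\mathfrak{F}$ of fibrations and the class $\mathfrak{W}$ of weak equivalences, and a fortiori the acyclic fibrations $\mathfrak{F}\cap\mathfrak{W}$, establishing condition (ii). I do not expect any serious obstacle here: the genuinely nontrivial input, the explicit description of the underlying collections, is already supplied by Theorem~\ref{th:EnvelopeExplicit}, and the only point that calls for a little care is to check throughout that the relation $a_1\circ_1 a_2-a_1\cdot a_2$ is compatible with the internal differentials, so that both constructions and the adjunction bijection are genuinely dg-functorial rather than merely functorial on the underlying collections.
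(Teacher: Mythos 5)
Your proposal is correct and follows essentially the same route as the paper: part (1) via Theorem~\ref{th:EnvelopeExplicit}(i) and the arity reindexing $(\calA^+)^-\cong\calA$, part (2) by identifying operad morphisms out of the presented quotient of $\calT(\calA^+)$ with product-preserving collection maps $\calA\to\calP^-$, and the Quillen property by observing that $\mathsf{A}$ merely shifts arity, hence preserves component-wise epimorphisms and quasi-isomorphisms. Your explicit attention to the relation $1-\id$ and to dg-compatibility makes a couple of points the paper leaves implicit, but the substance of the argument is identical.
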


\begin{proof}
To establish the first statement, we need to check that for every weight graded dg algebra $\calA$, we have 
 \[
\mathsf{A}(\calU_{\min}(\calA))\cong \calA ,
 \]
where the isomorphism is natural in $\calA$. By Formula \eqref{eq:Umin}, the underlying ns collection of~$\calU_{\min}(\calA)$ is~$\calA^+$, so the underlying ns collection of $\mathsf{A}(\calU_{\min}(\calA))$ is $(\calA^+)^-\cong \calA$. The product in this algebra is clearly the same as the original product in $\calA$.

To prove the second statement, we need to show that for every weight graded dg algebra $\calA$ and every dg operad $\calP$, we have
 \[
\Hom_{\dgOp}(\calU_{\max}(\calA),\calP) \cong\Hom_{\dgAlg}(\calA,\mathsf{A}(\calP)) ,
 \]
the isomorphism being natural in $\calA$ and $\calP$. Let us note that since the underlying ns collection of $\mathsf{A}(\calP)$ is $\calP^-$, a map $f\in \Hom_{\dgAlg}(\calA,\mathsf{A}(\calP))$ is the same as a collection of maps $f_n\colon \calA(n)\to\calP^-(n)$, $n\ge0$, which are maps of chain complexes satisfying for all $p,q\ge 0$ the constraints
 \[
f_p(b_1)\circ_1 f_q(b_2)=f_{p+q}(b_1\cdot b_2) .
 \]
Also, since~$\calU_{\max}(\calA)$ is generated by $\calA^+$ modulo the defining relations $a_1\circ_1 a_2-a_1\cdot a_2$, a map $g\in \Hom_{\dgOp}(\calU_{\max}(\calA),\calP)$ is the same as a collection of maps of chain complexes
$g_n\colon\calA^+(n)\to\calP(n)$, $n\ge 1$, satisfying for all $k, l\ge 1$ the constraints
 \[
g_k(b_1)\circ_1 g_l(b_2)-g_{k+l-1}(b_1\cdot b_2)=0 . 
 \] 
Now it is obvious that the assignment $g_n:=f_{n-1}$ establishes a natural one-to-one correspondence between the hom-sets we consider. The Quillen adjunction claim follows from the fact that the functor $\mathsf{A}$ only shifts the arity by one without changing the individual components of the underlying collections, so it clearly preserves fibrations and acyclic fibrations.
\end{proof}

\section{Enveloping operads and Koszul duality}

It turns out that the two kinds of enveloping operads that we defined are very well suited for purposes of homotopical algebra, in particular, they agree nicely with the bar constructions for weight graded algebras and ns operads.  To make this statement precise, let us remark that there are analogous constructions for cooperads. Let $\calC$ be a weight graded dg coassociative coalgebra . The coproduct in $\calC$ induces an operation 
 \[
\bigtriangleup\colon\calC^+(k+l-1)=\calC_{k+l-2}\to\bigoplus_{k,l} \calC(k-1)\otimes \calC(l-1)=\bigoplus_{k,l}  \calC^+(k)\otimes\calC^+(l) . 
 \]
The cooperad~$\calU^c_{\max}(C)$ is the subcooperad of the cofree cooperad $\mathcal{T}^c(\calC^+)$ cogenerated by $\calC^+$ with corelations  $\Delta_{1}(c)=\bigtriangleup(c)$ and $\Delta_i(c_1\circ_i c_2)=c_1\otimes c_2$, for all $c\in \calC^+(k+l-1)$, $c_1\in\calC^+(k)$, $c_2\in\calC^+(l)$, and all $2\le i\le k$. Its underlying collection admits a description analogous to the above one. The cooperad~$\calU^c_{\min}(\calC)$ is the subcooperad of the cofree cooperad $\calT^c(\calC^+)$ cogenerated by $\calC^+$ with corelations $\Delta_{1}(c)=\bigtriangleup(c)$ for all $c\in \calC^+(k+l-1)$. 

\begin{theorem}\label{th:EnvelopingBar}
Let $\calA$ be a weight graded associative algebra. There exist quasi-isomorphisms of dg ns cooperads
\begin{gather}
\mathsf{B}(\calU_{\max}(\calA))\simeq\calU^c_{\min}(\mathsf{B}(\calA)) ,\label{eq:BUmax=UBmin}\\
\mathsf{B}(\calU_{\min}(\calA))\simeq\calU^c_{\max}(\mathsf{B}(\calA)) . \label{eq:BUmin=UBmax}
\end{gather}
\end{theorem}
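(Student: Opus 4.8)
The plan is to establish the two quasi-isomorphisms in parallel, concentrating on \eqref{eq:BUmax=UBmin}; the equivalence \eqref{eq:BUmin=UBmax} will then follow by the same argument after interchanging the roles of the ``free'' and ``imposed'' slot-one structures, that is, the extra relations $a_1\circ_i a_2$ $(i\ge 2)$ defining $\calU_{\min}(\calA)$ against the extra corelations defining $\calU^c_{\max}$. The guiding observation is that, on the level of underlying ns collections, both sides of \eqref{eq:BUmax=UBmin} are planar trees decorated by elements of $\calA$ whose slot-one structure is governed by the (co)multiplication: on the left through the defining relation $a_1\circ_1 a_2=a_1\cdot a_2$ of $\calU_{\max}(\calA)$ together with the explicit ``leftmost-input-a-leaf'' basis of Theorem \ref{th:EnvelopeExplicit}(ii), and on the right through the corelation $\Delta_1(c)=\bigtriangleup(c)$ that cuts $\calU^c_{\min}(\mathsf B(\calA))$ out of the cofree cooperad $\calT^c\big((\mathsf B(\calA))^+\big)$. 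Matching these collections requires a d\'ecalage exchanging the syzygy degree of the operadic bar construction $\mathsf B(\calU_{\max}(\calA))=\calT^c\big(s\,\calU_{\max}(\calA)\big)$ with the homological (bar-length) degree carried by the decorations coming from $\mathsf B(\calA)$.

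First I would construct an explicit morphism of dg ns cooperads
\[
\Psi\colon \mathsf B(\calU_{\max}(\calA))\longrightarrow \calU^c_{\min}(\mathsf B(\calA)).
\]
Since the target is a subcooperad of the cofree cooperad $\calT^c\big((\mathsf B(\calA))^+\big)$, the universal property reduces $\Psi$ to the datum of its corestriction to cogenerators, an arbitrary map of ns collections $\psi\colon \mathsf B(\calU_{\max}(\calA))\to (\mathsf B(\calA))^+$; one then has to impose two conditions on $\psi$, namely that the induced cooperad map lands inside the subcooperad defined by $\Delta_1=\bigtriangleup$, and that $\Psi$ is a chain map. I would define $\psi$ from the explicit bases of Theorem \ref{th:EnvelopeExplicit} by reading off, in the canonical planar order, the $\calA$-labels of a decorated outer tree and assembling them into an element of $\mathsf B(\calA)$; note that, as the corestriction of a map into a cofree cooperad, $\psi$ is allowed nonzero components on every syzygy degree, which is exactly what the d\'ecalage demands. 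The whole picture is compatible with the weight grading inherited from $\calA$, which is finite-dimensional in each weight and will drive the convergence of the spectral sequence below.

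The chain-map property of $\Psi$ is the first substantial check and, I expect, the main obstacle. On the left the bar differential contracts an internal edge of the outer tree, i.e.\ composes two adjacent reduced tree monomials inside $\calU_{\max}(\calA)$; whenever that composition falls into a first slot it is rewritten, through the defining relation of $\calU_{\max}(\calA)$, by means of the product of $\calA$. On the right the differential of $\calU^c_{\min}(\mathsf B(\calA))$ is the internal bar differential of $\mathsf B(\calA)$, which multiplies adjacent tensor factors. I would verify that, under the reading-off corestriction, these two operations correspond: slot-$\ge 2$ graftings are recorded by the cooperadic comultiplication while slot-one reductions are recorded by the algebra product, the corelation $\Delta_1(c)=\bigtriangleup(c)$ supplying precisely the cross-terms. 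Concretely this amounts to a twisting-cochain (Maurer--Cartan type) identity for $\psi$ relating $\psi\circ d_{\mathsf B}$ to the bar differential of $\calA$ corrected by $\bigtriangleup$; carrying it out is where the relation of $\calU_{\max}$ and the corelation of $\calU^c_{\min}$ are genuinely played against one another, and where the signs and planarity bookkeeping are most delicate.

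Finally, to prove that $\Psi$ is a quasi-isomorphism I would argue weight by weight, each weight component being finite-dimensional, and filter both complexes so that the pure-grafting (slot-$\ge 2$) part of the left-hand bar differential is strictly lower order. On the associated graded only the slot-one contractions survive, and these assemble precisely into the bar differential of $\calA$ distributed along the (now frozen) trees; the same filtration leaves on the right-hand side only the internal bar differential of $\mathsf B(\calA)$ at each vertex. Both associated graded complexes are thus, vertexwise, tensor powers of the bar complex $\mathsf B(\calA)$ indexed by the same decorated trees, so the K\"unneth theorem over the field $\k$ identifies their homologies with $\calU^c_{\min}$ applied to the homology of $\mathsf B(\calA)$ (using the cooperadic analogue of Theorem \ref{th:EnvelopeExplicit}, by which $\calU^c_{\min}$ is itself a direct sum of such tensor powers and hence commutes with homology over a field) and shows that $\Psi$ induces an isomorphism on associated graded homology. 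A spectral sequence, convergent by finite-dimensionality in each weight, upgrades this to the quasi-isomorphism \eqref{eq:BUmax=UBmin}; the analogous construction with $\calU_{\min}$ and $\calU^c_{\max}$ yields \eqref{eq:BUmin=UBmax}.
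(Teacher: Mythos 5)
Your architecture for proving \eqref{eq:BUmax=UBmin} (a cogenerator-level map $\Psi$, then a filtration and a weight-wise spectral sequence) is reasonable in outline, but the decisive step is wrong: your filtration runs in the wrong direction, and the claimed isomorphism on associated graded homology is false. You put the slot-$\ge 2$ graftings in strictly lower order, so that on the associated graded only the slot-one contractions (the products of $\calA$) survive. But it is precisely the grafting part of the bar differential that performs the collapse of the huge cofree cooperad $\mathsf{B}(\calU_{\max}(\calA))=\calT^c(s\,\calU_{\max}(\calA))$ onto the small target; pushed to lower filtration order, it only acts on later pages. A concrete test: take $\calA=\k[x]/(x^2)$ with $x$ of weight one. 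Then $\calU_{\max}(\calA)$ has one binary generator $\mu$ with $\mu\circ_1\mu=0$, and by Theorem~\ref{th:EnvelopeExplicit}~(ii) it is one-dimensional in every arity (right combs). Every slot-one contraction in the bar complex multiplies two copies of $x$ and hence vanishes, so your associated-graded differential is identically zero and your $E^1$ is all of $\calT^c(s\,\calU_{\max}(\calA))$ — already three-dimensional in arity $3$ — whereas $\calU^c_{\min}(\mathsf{B}(\calA))\cong\mathsf{B}(\calA)^+$ is one-dimensional in each arity. So $\Psi$ cannot induce an isomorphism on associated graded homology, and the spectral sequence does not degenerate where you need it to. The same example exposes a second error: by the cooperad analogue of Theorem~\ref{th:EnvelopeExplicit}~(i), the underlying collection of $\calU^c_{\min}(\mathsf{B}(\calA))$ is a \emph{single} copy of $\mathsf{B}(\calA)^+$, not a tree-indexed direct sum of tensor powers of $\mathsf{B}(\calA)$; that ``spine'' description belongs to $\calU^c_{\max}$, i.e.\ to the other equivalence \eqref{eq:BUmin=UBmax}, so your K\"unneth identification conflates the two cases. (You also assume $\calA$ finite-dimensional in each weight, which the theorem does not grant, though this is repairable since the relevant filtration is bounded in each arity.)

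The repair is to reverse the filtration: filter $\mathsf{B}(\calU_{\max}(\calA))$ by the total number of $\calA$-labels in the normal forms of Theorem~\ref{th:EnvelopeExplicit}~(ii). Graftings preserve this number while slot-one contractions strictly decrease it, so the associated graded is the bar complex of the \emph{homogeneous} quadratic operad $q\calU_{\max}(\calA)$ whose relations are spanned by the monomials $a_1\circ_1 a_2$. That operad has a quadratic Gr\"obner basis, hence is Koszul, so $E^1$ is concentrated on the diagonal and equals its Koszul dual cooperad, the span of left combs with $s\calA$-labels, which is exactly $\mathsf{B}(\calA)^+$ as a collection; the differential $d^1$ then recovers the bar differential of $\calA$, and convergence (arity by arity, the filtration being bounded) yields \eqref{eq:BUmax=UBmin}. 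This PBW-type mechanism is what the paper invokes in packaged form: the Gr\"obner bases of Theorem~\ref{th:EnvelopeExplicit} show that the quadratic-linear presentations satisfy the conditions $(ql_1)$ and $(ql_2)$ of the inhomogeneous Koszul duality of \cite{GTV}, whence $\mathsf{B}(\calU_{\max}(\calA))\simeq\calU_{\max}(\calA)^{\ac}$, and the Koszul dual dg cooperad is identified by inspection with $\calU^c_{\min}(\mathsf{B}(\calA))$; for the second equivalence one gets $\calU_{\min}(\calA)^{\ac}\cong\calU^c_{\max}(\mathsf{B}(\calA))$ via the spine decomposition, in fact an isomorphism rather than a mere weak equivalence. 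Note that this route never requires your explicit map $\Psi$, whose chain-map (twisting-cochain) identity you flag as the main obstacle but do not verify; as written, your proposal has a genuine gap at the heart of the quasi-isomorphism claim.
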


\begin{proof}
This result also follows from the computation of operadic Gr\"obner bases we performed when proving Theorem \ref{th:EnvelopeExplicit}, paired with the inhomogeneous Koszul duality theory \cite{GTV}. Indeed, the Gr\"obner bases that we found readily demonstrate that for both operads~$\calU_{\max}(\calA)$ and~$\calU_{\min}(\calA)$ with their defining presentations, the two conditions of the inhomogeneous Koszul duality,  that is the minimality of the space of generators $(ql_1)$ and the maximality of the space of relations $(ql_2)$, are satisfied, so both operads~$\calU_{\min}(\calA)$ and~$\calU_{\max}(\calA)$ are inhomogeneous Koszul. 

The space of relations of the associated homogeneous quadratic operad $q\calU_{\max}(\calA)$ is spanned by all elements $a_1\circ_1 a_2$, so the space of corelations of the Koszul dual cooperad $q\calU_{\max}(\calA)^{\ac}$ is spanned by  all elements $s a_1\circ_1 s a_2$. Thus, as a ns collection, that cooperad is $\mathsf{B}(\calA)^+$, and examining the differential which recalls the linear parts of relations, we see by a direct inspection that 
 \[
\calU_{\max}(\calA)^{\ac}\cong \calU^c_{\min}(\mathsf{B}(\calA)) .
 \]
Since the inhomogeneous quadratic operad~$\calU_{\max}(\calA)$ is Koszul, we have
 \[
\mathsf{B}(\calU_{\max}(\calA))\simeq\calU_{\max}(\calA)^{\ac}\cong \calU^c_{\min}(\mathsf{B}(\calA)) ,
 \]
proving the first claim of the theorem. 

The space of relations of the associated homogeneous quadratic operad $q\calU_{\min}(\calA)$ is spanned by all elements $a_1\circ_i a_2$ for all $i$ (the distinction between $i=1$ and $i\ge 2$ disappears after forgetting the linear terms), so the space of corelations of the Koszul dual cooperad $q\calU_{\max}(\calA)^{\ac}$ is spanned by  all elements $s a_1\circ_i s a_2$. Thus, if we ignore the differential of that cooperad, it is nothing but the cofree ns cooperad 
$\calT^c(s\calA^+)$. The differential recalls the linear parts of relations, so the only contributions to the differential come from collapsing the first input edge of a vertex, which leads to an isomorphism
 \[
\calU_{\min}(\calA)^{\ac}\cong \mathsf{B}(\calU_{\min}(\calA)) ,
 \]
which is somewhat clear given that our generators of~$\calU_{\min}(\calA)$ actually form a linear basis of that operad, so the inhomogeneous Koszul duality provides the bar-cobar resolution \cite{LoVa}. We shall however reinterpret that operad in a slightly different way now.  We note that each internal vertex of the underlying tree of every tree monomial in $\calT^c(s\calA^+)$ is included in a unique ``spine'', the maximal chain of internal vertices for which the edge connecting every two neighbouring vertices is the first input of one of them. To each of these spines we may assign a decomposable tensor in $\mathsf{B}(A)$, the tensor product of the labels of the vertices along the spine in the order from the root to the leaf. This identifies $\calT^c(s\calA^+)$ with the underlying ns collection of~$\calU^c_{\max}(\mathsf{B}(\calA))$. By a direct inspection of the differentials, we see that  
 \[
\mathsf{B}(\calU_{\min}(\calA))\cong \calU_{\min}(\calA)^{\ac}\cong \calU^c_{\max}(\mathsf{B}(\calA)) ,
 \]
proving the second claim of the theorem (in fact, in this case we have an isomorphism, not merely a weak equivalence).
\end{proof}

This implies the following result relevant for the Koszul duality theory.

\begin{corollary}\label{cor:MinMaxKoszulDuality} Suppose that a weight graded associative algebra $\calA$ is quadratic. 
\begin{itemize}
\item[(i)] Both operads\, $\calU_{\min}(\calA)$ and\, $\calU_{\max}(\calA)$ are quadratic as well.
\item[(ii)] For the corresponding homogeneous quadratic presentations of\, $\calU_{\min}(\calA)$ and\, $\calU_{\max}(\calA)$, we have 
\begin{gather}
(\calU_{\min}(\calA))^{\ac}=\calU^c_{\max}(\calA^{\ac}) , \label{eq:DUmin=UmaxD}\\
(\calU_{\max}(\calA))^{\ac}=\calU^c_{\min}(\calA^{\ac}) . \label{eq:DUmax=UminD}
\end{gather}
\item[(iii)] The operads\, $\calU_{\min}(\calA)$ and\, $\calU_{\max}(\calA)$ are Koszul if and only if the algebra $\calA$ is Koszul. 
\end{itemize}
\end{corollary}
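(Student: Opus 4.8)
The plan is to obtain (i) and (ii) by trimming the defining presentations of Definition~\ref{def:AlgToOp} to genuinely quadratic ones and then dualising, and to deduce (iii) from the bar-construction comparison of Theorem~\ref{th:EnvelopingBar} together with the Koszulness criterion via bar homology. Write $\calA=\calA(V,R)$ with $V=\calA(1)$ and $R\subseteq V\otimes V$. Since $\calA$ is generated in weight one, the defining relation $a_1\circ_1 a_2=a_1\cdot a_2$ expresses each higher-arity generator $\calA^+(n+1)=\calA(n)$ ($n\ge2$) as an iterated first-slot composite of the arity-two generators $V=\calA^+(2)$; carrying out this Tietze elimination, the relations $a_1\circ_i a_2$ with $a_1$ of higher arity collapse, via the operad axioms, to consequences of the arity-two ones. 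What survives is purely quadratic: $\calU_{\max}(\calA)=\calP(V,R_1)$ with $R_1:=\{\textstyle\sum_i a_i\circ_1 b_i:\sum_i a_i\otimes b_i\in R\}$, and $\calU_{\min}(\calA)=\calP(V,R_1\oplus V\circ_2 V)$, the extra summand being the whole second-slot space; this proves~(i). For~(ii) I would feed these presentations into $\calP^{\ac}=\mathcal{C}(s\mathcal{X},s^2\mathcal{R})$ and compare weight-two data. Here $(\calU_{\max}(\calA))^{\ac}=\mathcal{C}(sV,s^2R_1)$ has its unique, first-slot corelation equal to the coproduct corelation $\Delta_1(c)=\bigtriangleup(c)$ of $\calA^{\ac}=\mathcal{C}(sV,s^2R)$ and no constraint in the higher slots---precisely the defining data of $\calU^c_{\min}(\calA^{\ac})$---whereas for $\calU_{\min}(\calA)$ the relations additionally fill the entire second-slot space, which upon dualising is exactly what produces the extra corelations $\Delta_i(c_1\circ_i c_2)=c_1\otimes c_2$ $(i\ge2)$ separating $\calU^c_{\max}$ from $\calU^c_{\min}$, giving $(\calU_{\min}(\calA))^{\ac}=\calU^c_{\max}(\calA^{\ac})$. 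As all four cooperads are coquadratic, matching cogenerators and corelations suffices.

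For~(iii) I would invoke the criterion (recalled for operads in Section~\ref{sec:recollections} and equally valid for algebras) that a quadratic operad $\calP$, respectively a quadratic algebra $\calA$, is Koszul exactly when the homology of its bar complex $\mathsf{B}(\calP)$, respectively $\mathsf{B}(\calA)$, is concentrated on the diagonal. By Theorem~\ref{th:EnvelopingBar} one has $\mathsf{B}(\calU_{\max}(\calA))\simeq\calU^c_{\min}(\mathsf{B}(\calA))$ and $\mathsf{B}(\calU_{\min}(\calA))\cong\calU^c_{\max}(\mathsf{B}(\calA))$, so the whole of~(iii) reduces to a single homological assertion: for the dg coalgebra $\calC=\mathsf{B}(\calA)$, the homology of $\calU^c_{\min}(\calC)$ and that of $\calU^c_{\max}(\calC)$ are concentrated on the operadic diagonal if and only if $H(\calC)$ is concentrated on the algebraic diagonal.

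This homological assertion is the main obstacle, and I would settle it through the spine decomposition already produced in the proof of Theorem~\ref{th:EnvelopingBar}, which realises $\calU^c_{\max}(\mathsf{B}(\calA))$ as $\calT^c(s\calA^+)$ equipped with the differential that collapses first-input edges. Along each maximal first-input chain (spine) this differential is exactly the bar differential of $\calA$, whereas distinct spines are joined only at non-first inputs and therefore contribute independent tensor factors; since we work over a field, the Künneth theorem then computes the homology as the same tree-wise (cofree) construction applied to the spine homology $H(\mathsf{B}(\calA))$. The minimal variant $\calU^c_{\min}$ forces single-spine trees, so that $\calU^c_{\min}(\mathsf{B}(\calA))\cong(\mathsf{B}(\calA))^+$ and its homology is simply $H(\mathsf{B}(\calA))^+$, making the $\calU_{\max}$ half immediate. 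The delicate point---and the genuine crux of the corollary---is the grading bookkeeping for the $\calU^c_{\max}$ half: I must verify that under the spine identification the operadic bidegree of a tree is the sum of the algebraic bidegrees of its spines, so that a tensor of spine-homology classes lands on the operadic diagonal if and only if every factor is algebraically diagonal (each spine contributing a nonnegative defect that vanishes precisely on the diagonal). Granting this additivity, operadic diagonal concentration of $H(\calU^c_{\max}(\mathsf{B}(\calA)))$ is equivalent, spine by spine, to algebraic diagonal concentration of $H(\mathsf{B}(\calA))$, which is exactly the content of~(iii).
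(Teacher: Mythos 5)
Your proposal is correct, and for part (iii) it follows the same strategy as the paper: reduce everything to bar complexes and invoke Theorem~\ref{th:EnvelopingBar}. The differences are ones of emphasis. For (i) and (ii) the paper's primary argument also reads quadraticity and the Koszul dual off the bar homology (quadraticity as concentration of the second bar homology in weight~$2$, the dual cooperad as the diagonal part of $H_\bullet(\mathsf{B}(-))$), whereas you prove them by Tietze elimination of the higher-arity generators and explicit matching of quadratic data; the paper explicitly sanctions this alternative in its closing parenthetical remark, and your presentation $\calU_{\max}(\calA)=\calP(V,R_1)$, $\calU_{\min}(\calA)=\calP(V,R_1\oplus V\circ_2 V)$ is correct (it can even be shortened: since $\calU_{\max}$ is a left adjoint by Proposition~\ref{prop:adjunction}, it preserves presentations, which gives the max case instantly). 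One presentational caveat in your (ii): the cogenerators of $\mathcal{C}(sV,s^2R_1)$ and of $\calU^c_{\min}(\calA^{\ac})$ do not literally coincide ($sV$ versus all of $(\calA^{\ac})^+$), so ``matching cogenerators and corelations suffices'' is too quick as stated; one still needs the identification of underlying collections via left combs/spines, which is exactly the computation carried out in the proof of Theorem~\ref{th:EnvelopingBar}. Where your write-up genuinely adds value is in (iii): the paper compresses the final step into ``Theorem~\ref{th:EnvelopingBar} applies right away,'' leaving implicit that one must compute $H_\bullet(\calU^c_{\max}(\mathsf{B}(\calA)))$ from $H_\bullet(\mathsf{B}(\calA))$; your spine-wise decomposition into tensor products of bar complexes indexed by spine trees, the K\"unneth theorem over the ground field, and the additivity of the weight-minus-degree defect across spines (nonnegative on each factor, vanishing exactly on the diagonal) is precisely the missing bookkeeping, and it is carried out correctly, with the $\calU_{\max}$ half immediate from $\calU^c_{\min}(\mathsf{B}(\calA))\cong\mathsf{B}(\calA)^+$.
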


\begin{proof}
It is enough to demonstrate that each of these properties can be all formulated in terms of bar complexes, since then Theorem \ref{th:EnvelopingBar} applies right away. For~(i), quadraticity means that the second homology of the (co)bar complex is concentrated in weight $2$. For~(ii), we note that if $\calA$ is a weight graded algebra and $\calP$ is a ns operad,  then $\calA^{\ac}$ is the diagonal part (weight equals homological degree) of $H_\bullet(\mathsf{B}(\calA))$ and $\calP^{\ac}$ is the diagonal part of $H_\bullet(\mathsf{B}(\calP))$. Finally, for~(iii), we recall that the  Koszul property for $\calA$ (and $\calP$) means that the homology of the bar complex is concentrated on the diagonal.  (Also, statements (i) and (ii) can be proved by a direct inspection of the definitions.)
\end{proof} 

\section{Counterexamples for Koszul duality theory}\label{sec:counterexamples}

Let us explain how our enveloping operads can be used to convert algebraic counterexamples into operadic ones. We continue working with ns operads, however if one prefers symmetric operads, it is sufficient to consider the result of the symmetrisation functor \cite{LoVa} applied to the corresponding ns operads. 

\subsection{The Ginzburg--Kapranov functional equation does not imply Koszulness}

In this section, we explain that a counterexample of Piontkovski~\cite{Piont} showing that the Backelin functional equation for quadratic associative algebras does not guarantee the Koszul property can be used to produce similar counterexamples for (say) binary quadratic operads. 

\begin{theorem}\label{th:GKCounter}
There exist a Koszul operad $\calP_1$ and a non-Koszul operad $\calP_2$ for which 
 \[
g_{\calP_1}(t)=g_{\calP_2}(t)\quad  \text{ and  }\quad  g_{\calP_1^{\ac}}(t)=g_{\calP_2^{\ac}}(t) . 
 \]
\end{theorem}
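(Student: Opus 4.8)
The plan is to import Piontkovski's algebraic counterexample through the functor $\calU_{\max}$ and show that the invariants we need are controlled entirely by Poincar\'e series of algebras, which are preserved by our constructions. Piontkovski \cite{Piont} produced two quadratic associative algebras $\calA_1$ and $\calA_2$, one Koszul and one not, satisfying the Backelin functional equation and moreover having the \emph{same} Hilbert (Poincar\'e) series together with the same series for their quadratic duals; that is, $g_{\calA_1}(t)=g_{\calA_2}(t)$ and $g_{\calA_1^{\ac}}(t)=g_{\calA_2^{\ac}}(t)$. I would set $\calP_i:=\calU_{\max}(\calA_i)$ for $i=1,2$. By Corollary \ref{cor:MinMaxKoszulDuality}(iii), $\calP_i$ is Koszul if and only if $\calA_i$ is, so $\calP_1$ is Koszul and $\calP_2$ is not, delivering the Koszul/non-Koszul dichotomy required by the statement.

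The next step is to compute the relevant operadic Poincar\'e series in terms of the algebraic ones. By Theorem \ref{th:EnvelopeExplicit}(ii), the underlying ns collection of $\calU_{\max}(\calA)$ has a basis indexed by tree monomials whose every internal vertex has a leaf as leftmost input; combinatorially these are in bijection with planar rooted trees decorated by a basis of the augmentation ideal $\bar\calA=\calA^+$ in a constrained way, so the Poincar\'e series $g_{\calP_i}(t)$ depends only on the series $g_{\calA_i}(t)$. Concretely I expect a functional relation of the form $g_{\calU_{\max}(\calA)}(t)=\varphi\bigl(g_{\calA}(t),t\bigr)$ for a universal expression $\varphi$ obtained by solving the implicit equation that encodes ``leftmost input is a leaf''. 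Since $g_{\calA_1}=g_{\calA_2}$, this immediately gives $g_{\calP_1}(t)=g_{\calP_2}(t)$.

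For the Koszul duals, I would invoke Corollary \ref{cor:MinMaxKoszulDuality}(ii), which gives $(\calU_{\max}(\calA))^{\ac}=\calU^c_{\min}(\calA^{\ac})$ at the level of the homogeneous quadratic presentations. The Poincar\'e series of $\calP_i^{\ac}$ is therefore governed by $\calU^c_{\min}$ applied to $\calA_i^{\ac}$, and by the cooperadic analogue of Theorem \ref{th:EnvelopeExplicit}(i) the underlying collection of $\calU^c_{\min}$ (dually to $\calU_{\min}$) is simply a shift $(-)^+$; hence $g_{\calP_i^{\ac}}(t)$ is a universal function of $g_{\calA_i^{\ac}}(t)$. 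Concretely I expect $g_{\calP_i^{\ac}}(t)=t\cdot g_{\calA_i^{\ac}}(t)$, reflecting the arity shift induced by $(-)^+$. Because $g_{\calA_1^{\ac}}=g_{\calA_2^{\ac}}$, this yields $g_{\calP_1^{\ac}}(t)=g_{\calP_2^{\ac}}(t)$, completing the argument.

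The main obstacle I anticipate is verifying that the operads $\calP_i$ are binary (or at least have the advertised type), and more importantly that the passage from Hilbert series to Poincar\'e series is clean. The subtlety is that $\calU_{\max}$ applied to a \emph{non}-quadratic-looking algebra could produce generators in many arities, so I must use the quadraticity from Corollary \ref{cor:MinMaxKoszulDuality}(i) to control the weight grading and confirm that the ``same Poincar\'e series'' hypotheses on the algebras genuinely transfer. I would double-check that Piontkovski's algebras are presented with generators in a single weight so that $\calA_i^+$ sits in a single arity, making the combinatorics of $\varphi$ explicit; once that is pinned down, the equalities of series follow formally from $g_{\calA_1}=g_{\calA_2}$ and $g_{\calA_1^{\ac}}=g_{\calA_2^{\ac}}$ without any further computation.
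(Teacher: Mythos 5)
Your proposal matches the paper's proof essentially step for step: you take $\calP_i=\calU_{\max}(\calA_i)$ for Piontkovski's quadratic algebras, use Corollary~\ref{cor:MinMaxKoszulDuality}(iii) for the Koszul/non-Koszul dichotomy, Theorem~\ref{th:EnvelopeExplicit}(ii) to see that $g_{\calP_i}(t)$ is determined by $g_{\calA_i}(t)$, and Formula~\eqref{eq:DUmax=UminD} together with the cooperadic analogue of Theorem~\ref{th:EnvelopeExplicit}(i) to get $g_{\calP_i^{\ac}}(t)=t\,g_{\calA_i^{\ac}}(t)$, exactly as in the paper. The worries in your final paragraph (binarity, generators in a single weight) are not actually needed for the argument and do not affect its correctness.
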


\begin{proof}
According to \cite{Piont}, there  exist a Koszul weight graded algebra $\calA_1$ and a non-Koszul weight graded algebra $\calA_2$ for which $g_{\calA_1}(t)=g_{\calA_2}(t)$ and $g_{\calA_1^{\ac}}(t)=g_{\calA_2^{\ac}}(t)$. Let us consider the operads $\calP_1=\calU_{\max}(\calA_1)$ and $\calP_2=\calU_{\max}(\calA_2)$; by Corollary~\ref{cor:MinMaxKoszulDuality}~(iii), the former is Koszul and the latter is not Koszul. By Theorem \ref{th:EnvelopeExplicit}~(ii), the underlying ns collections of $\calP_1$ and $\calP_2$ are completely determined by those of $\calA_1$ and $\calA_2$ respectively, so 
 \[
g_{\calP_1}(t)=g_{\calP_2}(t) . 
 \]
Finally, by Formula~\eqref{eq:DUmax=UminD}, we have $\calP_1^{\ac}=\calU^c_{\min}(\calA_1^{\ac})$ and $\calP_2^{\ac}=\calU^c_{\min}(\calA_2^{\ac})$, so by a cooperad analogue of Formula~\eqref{eq:Umin}, we have 
 \[
g_{\calP_1^{\ac}}(t)=g_{(\calA_1^{\ac})^+}(t)=tg_{\calA_1^{\ac}}(t)=tg_{\calA_2^{\ac}}(t)=g_{(\calA_2^{\ac})^+}(t)=g_{\calP_2^{\ac}}(t) .
 \]
\end{proof}

\subsection{The Gr\"obner basis criterion does not imply non-Koszulness}

Currently, the most general criterion allowing to prove that an operad is Koszul is the Gr\"obner basis criterion, stating that an operad with a quadratic Gr\"obner basis is Koszul. In this section, we explain that a counterexample of Berger \cite{Berger} showing that there exist Koszul algebras whose Koszulness cannot be established by exhibiting a Gr\"obner basis can be used to produce similar counterexamples for (say) binary quadratic operads. 

\begin{theorem}\label{th:GrobnerCounter}
There exists a Koszul operad $\calP$ for which there is no choice of a monomial order making the defining quadratic relations of that operad a Gr\"obner basis.
\end{theorem}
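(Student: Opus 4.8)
The plan is to follow the same template as in Theorem~\ref{th:GKCounter}, converting Berger's algebraic counterexample \cite{Berger} into an operadic one. Recall that Berger exhibits a quadratic Koszul algebra $\calA$ for which no choice of a monomial order on words makes its defining quadratic relations a Gr\"obner basis. I would set $\calP=\calU_{\max}(\calA)$. By Corollary~\ref{cor:MinMaxKoszulDuality}~(i) this operad is quadratic, and by Corollary~\ref{cor:MinMaxKoszulDuality}~(iii) it is Koszul because $\calA$ is. It therefore remains to prove that no monomial order on tree monomials makes the defining quadratic relations of $\calP$ a Gr\"obner basis.

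The first step is to pin down the quadratic presentation of $\calP$. Writing $\calA=\calT(V)/(R)$ with $V=\calA(1)$ and $R\subseteq V^{\otimes 2}$, the relation $a_1\circ_1 a_2=a_1\cdot a_2$ shows that the whole augmentation ideal of $\calA$ is generated, under first-slot composition, by $V$; hence $\calP$ is generated by $V$ placed in arity~$2$. Inspecting the arity-$3$ component with the help of Theorem~\ref{th:EnvelopeExplicit}~(ii) (in a reduced monomial the leftmost input of every vertex is a leaf), one sees that the arity-$3$ part decomposes as $\calA(2)\oplus V^{\otimes 2}$, coming respectively from the first-slot and second-slot compositions, and that the space of quadratic relations of $\calP$ is precisely $R$ embedded into $\calT(V^+)^{(2)}$ via the first-slot (left-comb) composition $a\otimes b\mapsto a\circ_1 b$. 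Thus all relations of $\calP$, and all their leading monomials, are ``spine'' monomials: iterated first-slot compositions, which I will identify with words in $V$ exactly as in the spine construction used in the proof of Theorem~\ref{th:EnvelopingBar}. Under this identification, grafting a left comb onto the leftmost leaf of another, i.e. the composition $\circ_1$, corresponds to concatenation of words.

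The heart of the argument is a reduction of the operadic Gr\"obner-basis problem to the algebraic one. I would fix an arbitrary monomial order on the tree monomials of $\calT(V^+)$ and restrict it to spine monomials; one checks that, through the spine--word dictionary, this yields a monomial order on the words in $V$. Since the defining relations of $\calP$ live among spines and their leading terms are the leading words of the algebra relations $R$, any two of these leading terms can overlap only along a spine, producing the length-three spine that corresponds to an overlap $abc$ of the algebra's leading words. Moreover, reducing a spine monomial modulo these relations replaces a sub-spine $a\circ_1 b$ by a combination of other left combs and hence never leaves the world of spines; the operadic reduction of such an S-polynomial therefore mirrors, letter for letter, the reduction of the corresponding algebraic S-polynomial. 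Consequently, if the defining relations of $\calP$ formed a Gr\"obner basis, every operadic S-polynomial---in particular every spine one---would reduce to zero, and the dictionary would then show that the defining relations of $\calA$ form a Gr\"obner basis for the induced word order, contradicting Berger's theorem.

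The main obstacle, and the step deserving the most care, is this last comparison: establishing rigorously that the restriction of an operadic monomial order is a genuine monomial order on words, that the only overlaps among the spine-supported leading terms are spine overlaps (so that no ``extra'' overlaps coming from the non-spine part of the operad could be exploited to repair the Gr\"obner property), and that operadic reduction of a spine S-polynomial stays within spines and coincides with algebraic reduction. Once these bookkeeping facts are in place, the contradiction with \cite{Berger} is immediate and completes the proof.
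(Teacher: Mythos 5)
Your proposal is correct, and it starts from exactly the same example as the paper: $\calP=\calU_{\max}(\calA)$ for Berger's algebra $\calA$ (the three-dimensional Sklyanin algebra, Koszul by \cite{TV}), with Koszulness of $\calP$ obtained from Corollary~\ref{cor:MinMaxKoszulDuality}~(iii). Where you genuinely diverge is the mechanism of the contradiction. The paper never transfers the monomial order back to words; it argues by a dimension count instead: if the quadratic relations of $\calP$ formed a Gr\"obner basis, the normal tree monomials would force the underlying ns collection of $\calP$ to coincide with that of $\calU_{\max}(\calA')$, where $\calA'$ is the monomial algebra whose relations are the leading monomials (these are necessarily ``spine'' monomials, since every term of every relation is a $\circ_1$-composition, whatever the order); Berger's quantitative Lemma~5.2 gives $\dim\calA'(3)>10=\dim\calA(3)$, and Theorem~\ref{th:EnvelopeExplicit}~(ii) then yields $\dim\calU_{\max}(\calA')(3)>\dim\calU_{\max}(\calA)(3)$ in the appropriate component, a contradiction. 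That route sidesteps everything you flag as delicate: there is no need to verify that the restriction of an operadic order to spines is a monomial order on words, nor that operadic reductions mirror algebraic ones. Your transfer argument does go through --- the restriction of an operadic order to spines is compatible with concatenation since $\circ_1$-grafting of spines is concatenation, termination is automatic in the weight-graded setting with finite-dimensional pieces, and substituting spine relations into spines stays within spines --- and it buys something the paper's argument does not: it is a reusable reduction showing that for \emph{any} quadratic algebra $\calB$, an operadic order making the relations of $\calU_{\max}(\calB)$ a Gr\"obner basis induces a word order doing the same for $\calB$, so it converts any order-defying algebra into an order-defying operad while using Berger purely as a black box, whereas the paper's count needs the specific dimension inequality of \cite[Lemma~5.2]{Berger}. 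One simplification to your last paragraph: for your direction (operadic Gr\"obner basis implies algebraic Gr\"obner basis) you need not rule out ``extra'' non-spine overlaps at all, because a Gr\"obner basis reduces \emph{every} ideal element to zero, in particular every spine S-polynomial; additional overlaps could only impose further conditions, never repair the Gr\"obner property, so only the mirroring of reductions on spines actually requires proof.
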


\begin{proof}
Let us denote by $\calA$ the three-dimensional Sklyanin algebra; it is presented by the generators $a,b,c$ and relations 
\begin{gather*}
ab-ba=c^2,\\
bc-cb=a^2,\\
ca-ac=b^2.
\end{gather*}
We consider the operad $\calP=\calU_{\max}(\calA)$. Sklyanin algebras are known to be Koszul \cite{TV}, so by Corollary~\ref{cor:MinMaxKoszulDuality}~(iii), the operad $\calP$ is Koszul. Let us show that there is no monomial order for which the defining relations of that operad form a Gr\"obner basis. If we denote by $\alpha, \beta, \gamma$ the generators of the operad $\calP$ corresponding to $a,b,c$, then the defining relations of that operad are
\begin{gather*}
\alpha\circ_1\beta-\beta\circ_1\alpha=\gamma\circ_1\gamma,\\
\beta\circ_1\gamma-\gamma\circ_1\beta=\alpha\circ_1\alpha,\\
\gamma\circ_1\alpha-\alpha\circ_1\gamma=\beta\circ_1\beta.
\end{gather*}
Assume that there exists a monomial order in the free ns operad $\mathcal{T}(\alpha,\beta,\gamma)$ for which these relations form a Gr\"obner basis. By Theorem \ref{th:EnvelopeExplicit}~(ii), the underlying ns collections of~$\calU_{\max}(\calA)$ is completely determined by that of $\calA$, so under our assumption the underlying ns collection of the operad $\calP$ is the same as for the operad~$\calU_{\max}(\calA')$, where $\calA'$ is the quadratic algebra whose relations are the leading monomials of the relations above. However, by \cite[Lemma 5.2]{Berger}, we have $\dim(\calA'(3))>10=\dim(\calA(3))$. Given that trivially we have
 \[
\dim(\calA'(0))=\dim(\calA(0))=1, \quad  \dim(\calA'(1))=\dim(\calA(1))=3, \quad \text{ and  }\quad \dim(\calA'(2))=\dim(\calA(2))=6 , 
 \]
Theorem \ref{th:EnvelopeExplicit}~(ii) immediately implies that $\dim(\calU_{\max}(\calA')(3))>\dim(\calU_{\max}(\calA)(3))$, a contradiction. 
\end{proof}

In fact, the arguments of \cite[Sec.~5]{Berger} can be adapted \emph{mutatis mutandis} to prove a stronger result: there is no linear change of basis in the space of generators 
for which the defining relations of the operad~$\calU_{\max}(\calA)$ above form a Gr\"obner basis. 

\section{Enveloping operads and the Lagrange inversion formula}\label{sec:Lagr}

In this section, we explain how to use our definitions to give a new interpretation of the Lagrange inversion formula. This proof uses one idea appearing in Raney's proof of Lagrange inversion \cite{Raney}, but, as far as we can see, is conceptually different. Recall that, for a  formal power series $F(t)$, the coefficient of $t^k$ in $F(t)$ is denoted by $\left[t^k\right]F(t)$, and the compositional inverse of $F(t)$, if exists, is denoted $F(t)^{\langle -1\rangle}$.  

\begin{theorem}[Lagrange's inversion formula]\label{th:LIT}
Let $f(t)$ be a formal power series without a constant term and with a nonzero coefficient of $t$.  Then $f(t)$ has a compositional inverse, and
\begin{equation}\label{eq:LIT}
\left[t^n\right]f(t)^{\langle -1\rangle}=\frac1n \left[u^{n-1}\right]\left(\frac{u}{f(u)}\right)^n .
\end{equation}
\end{theorem}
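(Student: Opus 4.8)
The plan is to prove the Lagrange inversion formula by translating it into a statement about the bar/cobar duality of the enveloping operads constructed in this paper, using the Poincaré series machinery together with the combinatorial lemma of Raney. The starting observation is that the Ginzburg--Kapranov functional equation $g_{\calP}(g_{\calP^{\ac}}(t))=t$, valid for any Koszul ns operad, is already a compositional-inverse statement: it says that the Poincaré series of a Koszul operad and of its Koszul dual cooperad are compositional inverses of one another. So the strategy is to manufacture, from a given power series $f(t)$, a suitable (co)operad whose Poincaré series is controlled by $f$, and then read off the coefficients of the inverse series from the dimensions of the components of the Koszul dual.

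First I would reduce to the combinatorial heart of the matter. For a weight graded algebra $\calA$, Theorem~\ref{th:EnvelopeExplicit} and the cooperad analogue of Formula~\eqref{eq:Umin} pin down the Poincaré series of $\calU_{\max}(\calA)$ and of $\calU^c_{\min}(\calA^{\ac})=(\calU_{\max}(\calA))^{\ac}$ in terms of $g_{\calA}(t)$ and $g_{\calA^{\ac}}(t)$; in particular, the cooperad side carries the simple factor $g_{(\calA^{\ac})^+}(t)=t\,g_{\calA^{\ac}}(t)$ that already appeared in the proof of Theorem~\ref{th:GKCounter}. Invoking the Backelin equation $g_{\calA}(t)g_{\calA^{\ac}}(t)=1$ for a Koszul algebra, I would express everything in terms of a single series and feed it into the Ginzburg--Kapranov equation for the Koszul operad $\calU_{\max}(\calA)$. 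The upshot should be a clean compositional-inverse relation between two explicit series built from $g_{\calA}$; the bar/cobar duality of Theorem~\ref{th:EnvelopingBar} is what guarantees the Koszulness needed to apply Ginzburg--Kapranov in the first place.

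The combinatorial input of Raney enters when one actually extracts the coefficient $\left[t^n\right]f(t)^{\langle -1\rangle}$. The description in Theorem~\ref{th:EnvelopeExplicit}~(ii) realises the components of $\calU_{\max}(\calA)$ (or the dual cooperad) as spans of planar trees subject to a leftmost-leaf condition, and the coefficients of the relevant inverse series are therefore counts of such decorated planar trees weighted by the dimensions of the components of $\calA$. Raney's lemma — the cyclic-shift bijection underlying the enumeration of planar trees by their degree sequence — is precisely the device that converts this tree count into the $\frac1n\left[u^{n-1}\right]\bigl(u/f(u)\bigr)^n$ expression, the factor $\frac1n$ being the hallmark of a cyclic-symmetry argument. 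Concretely, I would match the weight-$n$ tree tensors with the lattice-path or cyclic-word objects that Raney counts, so that the operadic Poincaré series identity becomes the classical generating-function identity.

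The hard part will be setting up the dictionary in the last paragraph with the correct normalisations so that the specific series $u/f(u)$ and the exponent $n$, as well as the $\frac1n$ prefactor, emerge exactly rather than up to some index shift. The arity/weight bookkeeping is delicate: the functors $\mathsf{A}$, $(-)^+$, and the passage to the Koszul dual each shift degrees by one, and these shifts must be tracked carefully through the Poincaré series so that the substitution $t\mapsto u$ and the extraction of $\left[u^{n-1}\right]$ land on the right coefficient. I expect the conceptual content — that Lagrange inversion \emph{is} Ginzburg--Kapranov duality for these enveloping operads, with Raney's lemma supplying the explicit tree enumeration — to be straightforward once the equivalence is framed, while verifying that the combinatorial bijection is compatible with the operadic composition, and hence genuinely computes the coefficients of the compositional inverse, is where the real work lies.
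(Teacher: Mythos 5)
Your combinatorial core is exactly right and matches the paper: the paper also extracts the coefficients via the basis of Theorem~\ref{th:EnvelopeExplicit}~(ii) together with Raney's cyclic-shift lemma, proving precisely the identity
$\left[t^n\right]g_{\calU_{\max}(\calB)}(t)=\frac1n\left[u^{n-1}\right]g_{\calB^{\otimes n}}(u)$
that you anticipate, with the $\frac1n$ coming from the cyclic-symmetry argument. But the homotopical plumbing you propose has a genuine gap. You route the inversion through the Ginzburg--Kapranov equation $g_{\calP}(g_{\calP^{\ac}}(t))=t$ and the Backelin equation $g_{\calA}(t)g_{\calA^{\ac}}(t)=1$, both of which presuppose a quadratic, indeed Koszul, input. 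For an arbitrary $f(t)$, the weight graded algebra $\calA$ realizing $f(t)=t\,g_{\calA}(t)$ (e.g.\ one with all products of positive-degree elements equal to zero) has generators in all weights: it is not quadratic in the weight graded sense, $\calA^{\ac}$ is not even defined, and no Koszulness is available. Your claim that ``the bar/cobar duality of Theorem~\ref{th:EnvelopingBar} is what guarantees the Koszulness needed to apply Ginzburg--Kapranov'' is incorrect: Theorem~\ref{th:EnvelopingBar} holds for every weight graded algebra, but it does not make $\calU_{\max}(\calA)$ a Koszul quadratic operad --- that conclusion (Corollary~\ref{cor:MinMaxKoszulDuality}) requires $\calA$ itself to be quadratic and Koszul.

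The repair, and the paper's actual route, is to replace all Koszul duals by bar constructions: the unconditional acyclicity statements $\k\simeq\calA\otimes\mathsf{B}(\calA)$ and $\k\simeq\calP\circ\mathsf{B}(\calP)$ yield $g_{\calA}(t)g_{\mathsf{B}(\calA)}(t)=1$ and $g_{\calP}(g_{\mathsf{B}(\calP)}(t))=t$ for \emph{every} connected algebra and reduced connected operad; Backelin and Ginzburg--Kapranov are merely their Koszul specializations. After first reducing, by polynomiality of the coefficients of the inverse series, to the case where $f$ has leading coefficient $1$ and non-negative integer coefficients --- a normalization your ``manufacture a suitable (co)operad'' sentence glosses over --- one writes $f(t)=g_{\calA^+}(t)=g_{\calU_{\min}(\calA)}(t)$ using \eqref{eq:Umin}, whence
$f(t)^{\langle -1\rangle}=g_{\mathsf{B}(\calU_{\min}(\calA))}(t)=g_{\calU_{\max}(\mathsf{B}(\calA))}(t)$
by \eqref{eq:BUmin=UBmax}. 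The Raney count is then applied to $\calB=\mathsf{B}(\calA)$, whose Poincar\'e series is $g_{\calA}(u)^{-1}=u/f(u)$, giving exactly the right-hand side of \eqref{eq:LIT}. Note also the min/max asymmetry you miss: applying the tree count to $\calU_{\max}(\calA)$ itself, as your sketch suggests, would compute the coefficients of the inverse of $t\,g_{\mathsf{B}(\calA)}(t)=t^2/f(t)$ rather than of $f(t)$; the correct object is the max-envelope of the \emph{bar construction} of $\calA$, reached through $\calU_{\min}(\calA)$, whose role your proposal omits entirely.
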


\begin{proof}
First, note that it is enough to prove Formula \eqref{eq:LIT} for power series $f(t)$ where the coefficient of $t$ equal to $1$ (using the change of variables $t\mapsto \lambda t$). Moreover, one may assume that all other coefficients of $f(t)$ are non-negative integers; this is true because each coefficient of the inverse series of $f(t)$ is  a polynomial expressions in finitely many coefficients of $f(t)$, so it is enough to check the equality at all non-negative integer points. Each such series $f(t)$ is of the form $t\,h(t)$, where $h(t)$ is a power series of a weight graded associative algebra (for instance we can consider algebras for which the product of any two homogeneous elements of positive degrees is equal to zero). 
By~\cite[Th.~2.3.2]{LoVa}, always have $\k\simeq \calA\otimes \mathsf{B}(\calA)$, which implies $g_\calA(t)g_{\mathsf{B}(\calA)}(t)=1$. Similarly, by~\cite[Th.~6.6.2]{LoVa}, we always have $\k\simeq\calP\circ\mathsf{B}(\calP)$, which implies $g_{\calP}(g_{\mathsf{B}(\calP)}(t))=t$. Thus, by Formulas~\eqref{eq:Umin} and~\eqref{eq:BUmin=UBmax}, we have
 \[
\left(f(t)\right)^{\langle -1\rangle}=
\left(t\,g_\calA(t)\right)^{\langle -1\rangle}=
g_{\calA^+}(t)^{\langle -1\rangle}=
g_{\calU_{\min}(\calA)}(t)^{\langle -1\rangle}=
g_{\mathsf{B}(\calU_{\min}(\calA))}(t)=
g_{\calU_{\max}(\mathsf{B}(\calA))}(t).
 \]

Let us show that for each weight graded algebra $\calB$, we have 
\begin{equation}\label{eq:Lagr}
\left[t^n\right] g_{\calU_{\max}(\calB)}(t)=\frac1n \left[u^{n-1}\right]g_{\calB^{\otimes n}}(u) 
\end{equation}
for all $n\ge1$. We shall use the basis of the underlying ns collection of~$\calU_{\max}(\calB)$ from Theorem \ref{th:EnvelopeExplicit}~(ii). To each basis element $\alpha$ of~$\calU_{\max}(\calA)(n)$, we associate a decomposable tensor $v_1\otimes\cdots \otimes v_n$ in $\calB^{\otimes n}$ as follows. If the leaf $i$ of $\alpha$ is the leftmost input of its parent vertex, and the label of that vertex is an element $b\in\calB$, we put $v_i=b$, else we put $v_i=1$. Since the weight grading of $\calB$ corresponds to the ``arity minus one'' weight grading on the operad, it is clear that for all $i=1,\ldots, n-1$ we have
 \[
|v_1|+\cdots+|v_i|\ge i,
 \] 
and that $|v_1|+\cdots+|v_n|=n-1$. Moreover, each decomposable tensor $v_1\otimes \cdots\otimes v_n$ with $|v_1|+\cdots+|v_i|\ge i$ for all $i=1,\ldots,n-1$ and with $|v_1|+\cdots+|v_n|=n-1$ gives rise to a basis monomial; we merely form the subsequence $v_{i_1}$, \ldots, $v_{i_k}$ of all factors of positive degrees, and let 
 \[
\alpha=(\cdots (v_{i_1}\circ_{i_2} v_{i_2})\circ_{i_3} v_{i_3}\cdots )\circ_{i_k} v_{i_k} .
 \]
By \cite[Th.~2.1]{Raney},  for any sequence of non-negative integers $k_1,\ldots,k_n$ with $k_1+\cdots+k_n=n-1$, there exists a unique cyclic shift such that $k_1+\cdots+k_i\ge i$ for all $i=1,\ldots,n-1$. Therefore, for every decomposable tensor $w_1\otimes \cdots\otimes w_n$ with $|w_1|+\cdots+|w_n|=n-1$, there exists a unique cyclic shift for each the degree conditions above are satisfied. Passing to the Poincar\'e series, this proves Formula~\eqref{eq:Lagr}. Applying that formula to $\calB=\mathsf{B}(\calA)$, we see that
 \[
\left[t^n\right] g_{\calU_{\max}(\mathsf{B}(\calA)}(t)=\frac1n \left[u^{n-1}\right]g_{\mathsf{B}(\calA)^{\otimes n}}(u)=\frac1n \left[u^{n-1}\right](g_{\mathsf{B}(\calA)}(u))^n .
 \]
Finally, we recall that 
 \[
g_{\mathsf{B}(\calA)}(u)=g_\calA(u)^{-1}=\frac{u}{f(u)},
 \]
which completes the proof.
\end{proof}

\bibliographystyle{amsplain}

\begin{thebibliography}{10}
\bibitem{Ba} J\"orgen Backelin, \emph{A distributiveness property of augmented algebras, and some related homological results}, Ph.D. thesis, Stockholm University, 1983.
\bibitem{Berger} Roland Berger, \emph{Weakly confluent quadratic algebras}, Alg. Rep. Theory \textbf{1} (1998), no.~3, 189--213.
\bibitem{BrDo} Murray Bremner and Vladimir Dotsenko, \emph{Algebraic operads: an algorithmic companion}, Taylor \& Francis / CRC Press, 2016.
\bibitem{Do} Vladimir Dotsenko, \emph{Freeness theorems for operads via Gr\"obner bases}, S\'eminaires et Congr\`es \textbf{26} (2011), 61--76.
\bibitem{DoKh} Vladimir Dotsenko and Anton Khoroshkin, \emph{Gr{\"o}bner bases for operads}, Duke Math. J. \textbf{153} (2010), no.~2, 363--396.
\bibitem{Dzh} Askar Dzhumadildaev, \emph{Codimension growth and non-Koszulity of Novikov operad}, Comm. Algebra \textbf{39} (2011), no.~8, 2943--2952.
\bibitem{DzhZu} Askar Dzhumadildaev and Pasha Zusmanovich, \emph{The alternative operad is not Koszul}, Experiment. math. \textbf{20} (2011), no.~2, 138-144.
\bibitem{ErMe} Richard Ehrenborg and Miguel M\'endez, \emph{A bijective proof of infinite variated Good's inversion}, Adv. Math. \textbf{103} (1994), no.~2, 221--259. 
\bibitem{GTV} Imma Galvez-Carrillo, Andy Tonks, and Bruno Vallette, \emph{Homotopy Batalin-Vil\-kovisky algebras}, J. Noncomm. Geom. \textbf{6} (2012), no.~3, 539--602. 
\bibitem{GeLa} Ira Gessel and Gilbert Labelle, \emph{Lagrange inversion for species}, J. Combin. Theory Ser. A \textbf{72} (1995), 95--117.
\bibitem{GiKa} Victor Ginzburg and Mikhail Kapranov, \emph{Koszul duality for operads}, Duke Math. J. \textbf{76} (1994), no.~1, 203--272. 
\bibitem{GoRe1} Michel Goze and Elisabeth Remm, \emph{A class of nonassociative algebras including flexible and alternative algebras, operads and deformations}, 
J. Gen. Lie Theory Appl. \textbf{9} (2015), no.~2, 6~pp.
\bibitem{Ha} Ryu Hasegawa, \emph{Two applications of analytic functors}, Theor. Comp. Sci. \textbf{272} (2002), no.~1--2, 113--175. 
\bibitem{Hi} Vladimir Hinich, \emph{Homological algebra of homotopy algebras}, Comm. Algebra \textbf{25} (1997), no.~10, 3291--3323.
\bibitem{Ho} Mark Hovey, \emph{Model categories}, Mathematical Surveys and Monographs, vol. 63, American Mathematical Society, Providence,
RI, 1999. 
\bibitem{Jo} Andr\'e Joyal, \emph{Une th\'eorie combinatoire des s\'eries formelles}, Adv. Math. \textbf{42} (1981), no.~1, 1--82. 
\bibitem{GoRe2} Michel Goze and Elisabeth Remm, \emph{Lie-admissible algebras and operads}, J. Algebra \textbf{273} (2004), no.~1, 129--152.
\bibitem{LoVa} Jean--Louis Loday and Bruno Vallette, \emph{Algebraic operads}, Grundlehren Math. Wiss. 346, Springer, Heidelberg, 2012. 
\bibitem{Piont} Dmitri Piontkovski, \emph{On the Hilbert series of Koszul algebras}, Func. Anal. Appl. \textbf{35} (2001), no.~2, 133--137.
\bibitem{Posic} Leonid Positselski, \emph{Relation between the Hilbert series of quadratic dual algebras does not imply Koszulity}, Funct. Anal. Appl. \textbf{29} (1995), no.~3, 213--217.
\bibitem{Qu} Daniel Quillen, \emph{Rational homotopy theory}, Ann. Math. (2) \textbf{90} (1969), 205--295.
\bibitem{Raney} George N.~Raney, \emph{Functional composition patterns and power series reversion}, Trans. AMS \textbf{94} (1960), no.~3, 441--451.
\bibitem{Stan2} Richard Stanley, \emph{Enumerative combinatorics}, vol.~2, Cambridge University Press, 2010.
\bibitem{TV} John Tate and Michel Van den Bergh, \emph{Homological properties of Sklyanin algebras}, Invent. Math. \textbf{124} (1996), 619--647.
\bibitem{Va} Bruno Vallette, \emph{Manin products, Koszul duality, Loday algebras and Deligne conjecture}, J. Reine Angew. Math. \textbf{620} (2008), 105--164. 
\end{thebibliography}
\providecommand{\bysame}{\leavevmode\hbox to3em{\hrulefill}\thinspace}

\end{document}